\newcommand{\RR}{{\mathbb R}}
\newcommand{\SP}{{\mathbb S}}
\newcommand{\CC}{{\mathbb C}}
\newcommand{\CI}{C^\infty}
\newcommand{\CIc}{C^\infty_{\comp}}
\newtheorem{theo}{Theorem}
\newtheorem{prop}{Proposition}[section]
\newtheorem{lemm}[prop]{Lemma}
\numberwithin{equation}{section}
\DeclareMathOperator{\comp}{\mathit{c}}
\DeclareMathOperator{\loc}{\mathrm{loc}}
\let\Im=\Imag
\let\epsilon=\varepsilon
\DeclareMathOperator{\supp}{supp}
\DeclareMathOperator{\tr}{tr}
\title[Heat traces and existence of resonances]{Heat traces and existence of scattering resonances for bounded potentials}
\author{Hart Smith}
\email{hart@math.washington.edu}
\address{Department of Mathematics, University of Washington, 
Seattle, WA 98195, USA}
\author{Maciej Zworski}
\email{zworski@math.berkeley.edu}
\address{Department of Mathematics, University of California,
Berkeley, CA 94720, USA}
\begin{document}

\maketitle

\begin{abstract} We show that any real valued
bounded potential with compact
support, $ V \in L^\infty_{\comp} ( \RR^n ; \RR) $, $ n $ odd, 
has at least one scattering resonance. For $ n \geq 3 $ 
this was previously known
only for sufficiently smooth potentials. The proof is based on the 
following inverse result: 
\[   V \in L^\infty_{\comp}
( \RR^n ; \RR ) , \ \  t^{\frac n 2 } \tr ( e^{  t ( \Delta - V ) } - e^{ t \Delta })
\in \CI ( [ 0 , \infty ) )  \ \Longleftrightarrow \  V \in 
\CIc ( \RR^n ; \RR) . \]
\end{abstract}

\section{Introduction and statements of results}

Let $ V \in L^\infty_{\comp} ( \RR^n ; \RR) $ be a bounded, compactly 
supported, real valued potential and let $ n \geq 3 $ be odd. 
We consider the Schr\"odinger operator, 
\begin{equation}
\label{eq:PV}  P_V=-\Delta+V(x) , 
\end{equation}
and ask the question whether $P_V$ always (for $ V \neq 0$) has infinitely 
many scattering resonances. Scattering resonances are defined
defined
as poles of the meromorphic continuation of the resolvent
\begin{equation}
\label{eq:RV}   R_V ( \lambda) :=  ( - \Delta + V - \lambda^2 )^{-1} ,  \ \ \text{ $ n$ odd, } 
\end{equation}
from $ \Im \lambda > 0 $ 
to $ \lambda \in \CC $.
These poles have many interesting interpretations and in particular 
appear in expansions of solutions to the wave equation -- see \S \ref{rst}
and references given there. For $ n $ even the situation is more
complicated as the meromorphic continuation has a logarithmic branch singularity at
$ \lambda = 0 $ -- see \cite{CH1} and references given there. Here we prove that
\begin{theo}
\label{th:resb}
Suppose that $ V \in L^\infty_{\comp } ( \RR^n ; \RR ) $ and that $ n $ is 
odd. Then the meromorphic continuation of the resolvent \eqref{eq:RV}, 
\[  R_V ( \lambda )  : L^2_{\comp}  ( \RR^n ) \to L^2_{\loc} ( \RR^n ) , \ \ 
\lambda \in \CC , \]
has at least one pole. If $\, V \in L^\infty_{\comp} ( \RR^n ; \RR ) 
\cap H^{\frac{n-3}2} ( \RR^n ) $ then $ R_V $ has infinitely many poles.
\end{theo}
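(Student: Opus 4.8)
The plan is to argue by contradiction with the help of the inverse result quoted in the abstract; throughout we may assume $V\not\equiv 0$. Write
\[ \theta(t):=\tr\bigl(e^{t(\Delta-V)}-e^{t\Delta}\bigr),\qquad t>0. \]
Both assertions follow once we know that if $R_V$ has only finitely many poles — none at all for the first assertion, finitely many under the extra hypothesis $V\in H^{(n-3)/2}$ for the second — then $t^{\frac n2}\theta(t)$ extends to an element of $\CI([0,\infty))$. Indeed, granting this, the inverse result forces $V\in\CIc(\RR^n;\RR)$; and once $V$ is smooth one reaches a contradiction as follows. The regularized determinant $D(\lambda):=\det_p\bigl(I+VR_0(\lambda)\bigr)$, with $p=\lceil n/2\rceil$ and $R_0(\lambda):=(-\Delta-\lambda^2)^{-1}$, is entire of finite order with zero set equal to the resonance set, so finitely many poles gives a Hadamard factorization $D(\lambda)=e^{q(\lambda)}\,\lambda^{m_0}\prod_{\mu_j\neq 0}(1-\lambda/\mu_j)$ with $q$ a polynomial; but $D(iy)\to 1$ as $y\to+\infty$ (because $\|VR_0(iy)\|_{\mathcal S_p}\to 0$) is compatible with this only if there are no zeros and $q$ is constant, i.e.\ $D\equiv 1$. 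Then the scattering determinant $\det S_V(\lambda)=D(-\lambda)/D(\lambda)$ is identically one and there are no bound states, so the spectral shift function vanishes, $\theta\equiv 0$, and hence $\int V=\int V^{2}=0$ from the first two small-time heat coefficients — forcing $V\equiv 0$, a contradiction. (Alternatively, one may simply cite the previously known case that a nonzero $V\in\CIc(\RR^n;\RR)$ has infinitely many resonances.)

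The heart of the matter is therefore the passage from ``few resonances'' to regularity of $t^{\frac n2}\theta(t)$ at $t=0$. I would route this through the (even) relative wave trace
\[ u(t):=\tr\bigl(\cos(t\sqrt{P_V})-\cos(t\sqrt{-\Delta})\bigr), \]
which is related to $\theta$ by the subordination formula $\theta(t)=\int_{\RR}(4\pi t)^{-1/2}e^{-s^2/4t}\,u(s)\,ds$. For $V\in L^\infty_{\comp}$ the Fourier transform of $u$ is expressible through $D'(\lambda)/D(\lambda)$, $D'(-\lambda)/D(-\lambda)$ and the finitely many bound-state terms, so the singularities of $u$ away from $t=0$ lie exactly at the poles of $R_V$. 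If these are finite in number, then after subtracting the corresponding contributions to $u$ — finite combinations of $|t|^{k}e^{i|t|\mu_j}$ — the remaining part of $u$ is supported at $\{0\}$, i.e.\ a finite linear combination of derivatives of $\delta$. Averaging against the Gaussian turns the $\delta$-part into a finite sum of half-integer powers $t^{-1/2},t^{-3/2},\dots$ whose product with $t^{n/2}$ is a polynomial (here $n$ odd is used), and turns each resonance term into a function that, after the substitution $t\mapsto\sqrt t$, is entire in $\sqrt t$; that only even powers of $\sqrt t$ survive after multiplication by $t^{n/2}$ follows from the reality relation $D(-\bar\lambda)=\overline{D(\lambda)}$ (since $V$ is real) together with the evenness of $u$. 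This gives $t^{\frac n2}\theta(t)\in\CI([0,\infty))$. The Sobolev hypothesis $V\in H^{(n-3)/2}$ enters at exactly one point: it is what yields $\|VR_0(\lambda)\|_{\mathcal S_p}\le Ce^{C|\Im\lambda|}(1+|\lambda|)^{M}$, hence that $D$ has finite order and that the Fourier transform of $u$ is polynomially bounded — without this one cannot conclude that the non-pole part of $u$ is supported at a point. When $R_V$ has no poles, $D$ is zero-free and this last step is correspondingly easier, which is why the first assertion needs no Sobolev restriction.

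The main obstacle is exactly this bridging step, and within it the two-sided control of $D(\lambda)=\det_p(I+VR_0(\lambda))$: one must (i) obtain Schatten-norm bounds on $VR_0(\lambda)$ with explicit polynomial dependence on $|\lambda|$, which for $n\ge 5$ are sensitive to the regularity of $V$ — this is the source of the hypothesis $V\in H^{(n-3)/2}$ in the second assertion — and (ii) verify that after removing the finitely many resonance contributions the remainder of $u$ has singular support $\{0\}$ and that the resulting expansion of $\theta$ has no stray half-integer powers of $t$ once multiplied by $t^{n/2}$, which rests on the parity and reality of the analytic objects involved. The inverse result itself, though it is the analytic core of the paper, is taken here as a black box.
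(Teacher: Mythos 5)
Your outline for the first assertion (no poles at all) is essentially sound and close to the paper's: absence of poles makes $\sigma'(\lambda)$ a polynomial, hence $t^{n/2}\tr(e^{-tP_V}-e^{-tP_0})$ smooth at $t=0$, hence $V\in\CIc$ by the inverse theorem, contradicting the known result for smooth potentials. (The paper gets the polynomial directly from the Hadamard factorization of $\det S_V$ in Proposition \ref{p:scatdet} rather than through the wave trace, which avoids needing the Poisson formula for $L^\infty$ potentials; also note your identity $\det S_V(\lambda)=D(-\lambda)/D(\lambda)$ for $D=\det_p(I+VR_0)$ is false for $n\ge 3$ without an extra factor $e^{h(\lambda)}$, $h$ a polynomial, so your direct route ``$D\equiv 1\Rightarrow\theta\equiv 0\Rightarrow V\equiv 0$'' does not work; you must fall back on your parenthetical citation of \cite{SZ}.)

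The genuine gap is in your treatment of the second assertion. Your central claim --- that \emph{finitely many} poles still forces $t^{n/2}\theta(t)\in\CI([0,\infty))$, with the would-be half-integer powers cancelling ``by parity and reality'' --- is false, and \S \ref{whynot} of the paper is devoted to exactly this point. A single resonance at $i\rho_0$ contributes a Lorentzian $-\frac1\pi\frac{\rho_0}{\lambda^2+\rho_0^2}$ to $\sigma'(\lambda)$, and by \eqref{eq:bj} its Gaussian average produces a term $\tfrac12 e^{\rho_0^2 t}$ in $\theta(t)$ that is \emph{not} absorbed by the $t^{-n/2}\times(\text{polynomial})$ part; after multiplying by $t^{n/2}$ it is an odd power of $\sqrt t$ times an analytic function and does not cancel against anything (the eigenvalue term $e^{t\mu_k^2}$ in \eqref{eq:bk} enters with coefficient $1$, not $\tfrac12$). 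So one cannot invoke Theorem \ref{th:heat} to conclude $V\in\CIc$ when there are finitely many but nonzero resonances, and your proof of ``infinitely many'' collapses at its first step. The paper's actual argument is different in kind: it uses the \emph{forward} direction of Theorem \ref{thm:main} --- the hypothesis $V\in H^{(n-3)/2}$ guarantees enough heat coefficients that $\tr(e^{-tP_V}-e^{-tP_0})-\sum_{k\le(n-1)/2}c_k' t^{-n/2+k}\to 0$ as $t\to 0^+$ --- and then computes the $t\to 0^+$ limit of both sides of the Birman--Krein formula after integrating the finitely many Lorentzians exactly, arriving at $K-\tfrac12K'+\tfrac12 m+\tfrac12 J_1+J_2=0$, which is impossible once the first assertion supplies at least one resonance. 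Relatedly, you misplace the role of the Sobolev hypothesis: it is not needed for Schatten-norm or finite-order bounds on the determinant (those hold for any $V\in L^\infty_{\comp}$ and are already in Proposition \ref{p:scatdet}); it is needed to produce sufficiently many terms of the heat expansion.
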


For $ V \in \CIc ( \RR^n ; \RR ) $ existence of infinitely many 
resonances was proved by 
Melrose \cite{Mel} for $ n = 3 $ 
and by 
S\'a Barreto--Zworski \cite{SZ} for all odd $ n $. 
Soon afterwards quantitative statements about the
counting function, $ N ( r ) $, of resonances in $ \{ |\lambda | \leq r \}$ were
obtained by Christiansen \cite{Ch} and S\'a Barreto \cite{Sa}:
\[ \lim \sup_{ r \to \infty } \frac{ N ( r ) } { r } > 0 . \]
For potentials generic in $ \CIc ( \RR^n;  {\mathbb F} ) $ or $ L^\infty_{\comp} ( 
\RR^n ; {\mathbb F} ) $, $ {\mathbb F} = \RR $ or $ \CC $, Christiansen and Hislop
\cite{Ch0},\cite{CH} proved a stronger statement 
\begin{equation}
\label{eq:CH}  \lim \sup_{ r \to \infty } \frac{ \log N ( r ) } {\log r } = n . 
\end{equation}
This means that the upper bound $ N ( r ) \leq C r^n $ from \cite{z} is 
optimal for generic complex or real valued potentials. The only case 
of asymptotics $ \sim r^n $ for {\em non-radial} potentials was 
provided by Dinh and Vu \cite{dv13} who proved that a large class of $ L^\infty $
potentials supported in $ B ( 0 , 1 )$ has resonances satisfying a Weyl law.

To prove Theorem \ref{th:resb} we proceed by contradiction, as in \cite{bs}, \cite{Mel} and \cite{SZ}, and assume that there are no resonances.
By a direct argument (Proposition \ref{p:scatdet}) this implies
that the scattering phase is a polynomial.
This in turn implies (Proposition \ref{p:bk}) that the heat trace has an asymptotic expansion. The main result of this note, Theorem \ref{th:heat} below, 
shows that this implies that $ V \in \CIc $, and since it is real valued we obtain a contradiction by \cite{Mel} and \cite{SZ}. (We provide a direct argument of the contradiction in \S \ref{rst}.)  See \S \ref{whynot} for why our arguments do not yield a contradiction for a finite number of resonances if $n\ge 5$ and $V\in L^\infty_{\comp}(\RR^n,\RR)$.

Although
we expect \eqref{eq:CH}, or possible even $ N ( r ) > r^n / C $ when $r\gg 1$,
to be true for all non-zero real valued potentials, Christiansen 
gave classes of examples of non-zero $ V \in \CIc ( \RR^n ; \CC ) $ 
which have no resonances. Potentials $ V \in \CIc ( \RR^n ; \CC ) $ are, however, known to have infinitely many resonances if 
\begin{equation}
\label{eq:V2} \int V^2 dx \neq 0 . \end{equation}
The condition \eqref{eq:V2} arises naturally from the use of heat trace coefficients in scattering asymptotics.

Our argument outlined above depends on the following, which is the principal new result of this paper.
\begin{theo}
\label{th:heat}
Suppose that $ P_V $ is given by \eqref{eq:PV}, and $ V \in L^\infty_{\comp} 
( \RR^n ; \RR ) $, where $ n \ge 1 $ may be even or odd. If 
\begin{equation}
\label{eq:ttrace}   t^{ \frac{n}{2}} \tr \left( e^{ -t P_V } - e^{ - tP_0} \right) 
\in \CI ( [ 0 , \infty ) ) \end{equation}
then $ V \in \CIc ( \RR^n; \RR ) $. 
\end{theo}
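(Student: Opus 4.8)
The plan is to prove, by induction on $ m \ge 0 $, that $ V \in H^m ( \RR^n ) $. Since $ V $ is bounded with compact support the case $ m = 0 $ is immediate, and once it holds for all $ m $ the Sobolev embedding together with the compactness of $ \supp V $ gives $ V \in \CIc ( \RR^n ) $, which is real valued. As $ V $ is compactly supported, $ V \in H^m $ is equivalent to $ \int |\xi|^{2m} | \hat V ( \xi) |^2 \, d\xi < \infty $; so the inductive step is: assuming $ V \in H^{m-1} $, show that this integral is finite.

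For the inductive step I would start from the Dyson (Duhamel) expansion
\[
  e^{-tP_V} - e^{-tP_0} = \sum_{j \geq 1} (-1)^j D_j ( t) , \qquad
  D_j ( t) = \int_{ s_0 + \cdots + s_j = t } e^{-s_0 P_0} V e^{-s_1 P_0} V \cdots V e^{-s_j P_0 } \, ds ,
\]
which for $ V \in L^\infty_{\comp} $ converges in trace norm, with all its $ t $-derivatives, for $ t $ in a bounded interval. The $ j = 1 $ term is elementary: by cyclicity and the semigroup property $ \tr D_1 ( t) = t \, ( 4 \pi t)^{-n/2} \int V \, dx $, so $ t^{n/2} \tr D_1 ( t) $ is a polynomial. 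For $ j = 2 $, using the explicit Gaussian kernel of $ e^{-sP_0} $ and Plancherel in $ \xi $ one computes
\[
  \tr D_2 ( t) = \kappa_n \, t^{ 2 - n/2 } F ( t ) , \qquad
  F ( t) = \int_{\RR^n} h \bigl( t | \xi |^2 \bigr) \, | \hat V ( \xi) |^2 \, d\xi ,
  \qquad h ( y ) = \int_0^1 ( 1 - w ) \, e^{ - y w ( 1 - w ) } \, dw ,
\]
with $ \kappa_n > 0 $; note $ 0 \le h \le \tfrac12 $, so $ F $ is bounded near $ 0 $. For $ j \geq 3 $, rescaling $ x_i \mapsto \sqrt t \, x_i $, $ s_i \mapsto t s_i $ and using translation invariance of the resulting Gaussian cycle kernel one writes $ t^{n/2} \tr D_j ( t) = \pm t^j \psi_j ( t) $, where on the Fourier side $ \psi_j $ pairs $ \hat V $ against a fixed Schwartz cycle weight evaluated at arguments $ \sqrt t \, \eta $; since that weight is even, $ \psi_j $ is a smooth function of $ t $ (not merely of $ \sqrt t $), and Hölder/Sobolev estimates — which for large $ m $ force moving many derivatives onto a few of the $ j $ factors of $ V $, handled by repeated integration by parts using $ V \in H^{m-1} \cap L^\infty_{\comp} $ — show $ \psi_j \in C^{m-1} ( [0,\epsilon) ) $ with norms summable in $ j $. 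Hence $ t^j \psi_j \in C^{ m - 1 + j } \subseteq C^{m+2} $, so $ \sum_{j \geq 3} ( -1)^j t^{n/2} \tr D_j ( t ) \in C^{m+2} ( [0,\epsilon) ) $. Comparing with the hypothesis \eqref{eq:ttrace}, we conclude $ \kappa_n \, t^2 F ( t) \in C^{m+2} ( [0,\epsilon) ) $, and therefore $ F ( t) = \sum_{ p = 0}^m c_p \, t^p + o ( t^m ) $ as $ t \to 0^+ $, with $ c_m $ finite.

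The analytic core is then short. Differentiating under the integral gives $ h^{(p)} ( y ) = ( - 1)^p g_p ( y) $ with $ g_p ( y ) = \int_0^1 w^p ( 1 - w)^{p+1} e^{ - y w ( 1 - w) } \, dw $, a strictly positive, strictly decreasing function of $ y \ge 0 $. For $ p \leq m - 1 $ the inductive hypothesis makes $ \int |\xi|^{2p} | \hat V|^2 \, d\xi $ finite, so $ F \in C^{m-1} $ near $ 0 $ with $ F^{(p)} ( 0) = ( -1)^p g_p ( 0) \int |\xi|^{2p} | \hat V|^2 \, d\xi $, which by uniqueness of asymptotic expansions identifies $ c_p $ for $ p \le m-1 $. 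By the Lagrange form of Taylor's theorem and the monotonicity of $ g_m $, for every $ y \ge 0 $ and some $ \eta = \eta ( y) \in ( 0 , y) $,
\[
  ( -1)^m \Bigl( h ( y) - \sum_{p=0}^{m-1} \frac{ ( -1)^p g_p ( 0) }{ p!} \, y^p \Bigr)
  = \frac{ g_m ( \eta ) }{ m! } \, y^m \ \geq \ \frac{ g_m ( y ) }{ m! } \, y^m \ \geq \ 0 .
\]
Dividing by $ t^m $, putting $ y = t | \xi |^2 $, integrating against $ | \hat V |^2 $, and applying Fatou's lemma as $ t \to 0^+ $ yields
\[
  ( -1)^m c_m \ = \ \lim_{ t \to 0^+ } \frac{ 1 }{ t^m } \int_{\RR^n} ( -1)^m \Bigl( h ( t|\xi|^2 ) - \sum_{p < m} \tfrac{ ( -1)^p g_p ( 0) }{ p!} ( t | \xi|^2 )^p \Bigr) | \hat V ( \xi) |^2 \, d\xi
  \ \geq \ \frac{ g_m ( 0) }{ m! } \int_{\RR^n} | \xi|^{2m} | \hat V ( \xi) |^2 \, d\xi .
\]
Since $ g_m ( 0) > 0 $ and $ c_m $ is finite, $ \int | \xi|^{2m} | \hat V |^2 \, d\xi < \infty $, i.e. $ V \in H^m $; this closes the induction, and hence $ V \in \CIc ( \RR^n ; \RR) $.

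The main obstacle I anticipate is not the analytic core — essentially the three displays above — but the bookkeeping for $ j \geq 3 $: establishing trace-norm convergence of the Dyson series and of its $ t $-derivatives on $ [0,\epsilon) $, the rescaling identity that isolates the prefactor $ t^j $, and especially the Sobolev/Hölder estimates certifying $ \psi_j \in C^{m-1} $ with bounds summable in $ j $. The conceptual subtlety there is that the $ j \geq 3 $ contributions must be shown strictly \emph{more} regular at $ t = 0 $ than the $ j = 2 $ term; this is exactly why the prefactor $ t^j $ with $ j \geq 3 $ matters, since it pushes any $ H^m $-type obstruction carried by $ D_j $ past the order $ t^{m+2} $ at which the obstruction in $ D_2 $ appears, so that it cannot cancel it.
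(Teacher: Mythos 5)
Your proposal follows essentially the same route as the paper: the Duhamel expansion, the Plancherel formula for the second-order trace, a Taylor-remainder-plus-Fatou argument (with the same positivity/monotonicity of the Lagrange remainder) to extract one more derivative of $V$, and an inductive bootstrap in which the terms of order $j\ge 3$ are expanded past order $m+2$ using only the $H^{m-1}$ hypothesis. The one step you flag as anticipated bookkeeping is exactly where the paper's main technical input lies --- a Gagliardo--Nirenberg--Moser inequality bounding $\int\prod_i\partial^{\alpha_i}V$ with $\sum_i|\alpha_i|=2p$, $|\alpha_i|\le p$, by $\|V\|_{L^\infty}^{j-2}\|V\|_{H^p}^2$, together with a mollification argument justifying the Fourier-side manipulations for non-smooth $V$ --- and the paper sidesteps your summability-in-$j$ requirement by treating only finitely many terms $3\le j\le m+2$ this way and crushing the tail with the crude trace-norm bound $\|W_j(t)\|_{\mathcal L^1}\le C^j j^{n/2}t^{j-n/2}/j!$.
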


Theorem \ref{th:heat} is a direct consequence of a more precise
result presented in Theorem \ref{thm:main} in \S \ref{heat}.
The study of heat expansions has a very long tradition going
back to Kac, Berger and McKean--Singer -- see \cite{cdv}, \cite{Gil}, \cite{PoHi}
for more recent accounts and references. Theorem \ref{th:heat}, although
not surprising, seems to be new. However, closely related inverse results
are well known. They concern recovering Sobolev norms from the
Taylor expansion coefficients of \eqref{eq:ttrace} for smooth potentials,
and using the resulting a priori bounds to 
prove compactness of sets of isospectral potentials -- see
Br\"uning \cite{Br} and Donnelly \cite{Don}, and for the origins
of that approach,  McKean--van Moerbeke \cite{MM}.

The paper is organized as follows. In \S \ref{rst} we review the
scattering theory needed for the proof of Theorem \ref{th:resb}. 
For detailed proofs we refer to the original papers and to 
the online notes \cite{res}. The section on the heat trace
\S \ref{heat} is by contrast completely self-contained. Some aspects
of the approach in \S \ref{heat} appear to be new, in particular the use of 
Gagliardo--Nirenberg--Moser inequalities in a bootstrap regularity scheme. 

\def\smallsection#1{\smallskip\noindent\textbf{#1}.}

\smallsection{Acknowledgements} We would like to thank
Gunther Uhlmann for a helpful discussion, in particular for reminding us of 
the references \cite{Br} and \cite{MM}, and Tanya Christiansen for 
helpful comments on the first version of this note. 
This material is based upon work supported by 
the National Science Foundation under Grants DMS-1161283(HS) 
and DMS-1201417(MZ).

\section{Review of scattering theory}
\label{rst}

Here we recall various facts in scattering theory and 
show how Theorem \ref{th:resb} follows from Theorem \ref{th:heat}.

\subsection{The scattering matrix}

The continued resolvent, $ R_V ( \lambda ) $, given in \eqref{eq:RV}
does not have any poles on $ \RR \setminus \{ 0 \} $ -- that 
is a well known consequence of the Rellich uniqueness theorem -- 
see \cite[\S 3.6]{res}. This implies that, for $ 
\lambda \in \RR \setminus \{ 0 \} $ and $ \omega \in \SP^{n-1} $,
there exist (unique)
solutions to 
\begin{gather}
\label{eq:outgs} 
\begin{gathered}  ( P_V - \lambda^2 ) w ( x , \lambda , \omega ) = 0 , \ \ 
w ( x, \lambda, \omega )  = e^{ - i \lambda \langle x , \omega \rangle } + u ( x , \lambda, \omega ) , 
\\ u ( x , \lambda, \omega) = |x|^{-\frac{n-1}2} e^{ i \lambda | x| } \left( b ( \lambda,  x/|x| , \omega) + 
\mathcal O ( |x|^{-1} ) \right), \ \ |x| \to \infty . \end{gathered}
\end{gather}
The {\em radiation pattern} $ b ( \lambda, \theta , \omega) $,  is the 
observed field in a scattering experiment. The {\em scattering matrix},
$ S_V ( \lambda ) $, 
can be defined using $ b ( \lambda , \theta , \omega) $. This definition 
is not the most intuitive, and we refer to \cite[\S 3.7]{res} for 
motivation. Here we define $ S_V  ( \lambda ) : L^2 ( \SP^{n-1} ) \to 
L^2 ( \SP^{n-1} ) $ as 
\begin{gather}
\label{eq:scat}
\begin{gathered}
S_V  ( \lambda ) f ( \theta ) = f ( \theta ) + \int_{\SP^{n-1} } 
a ( \lambda, \theta, \omega ) f ( \omega ) d \omega , \\
a ( \lambda, \theta , \omega ) := ( 2 \pi)^{-\frac{n-1}2} e^{  \frac\pi 4
( n - 1) i } \lambda^{\frac{n-1}2} b ( \lambda , \theta, - \omega ) . 
\end{gathered}
\end{gather}
We also have the following useful representations of $ a ( \lambda , \theta, \omega) $:
\begin{equation}
\label{eq:repa}
\begin{split}
a ( \lambda, \theta, \omega ) &= a_n \lambda^{n-2} 
\int_{\RR^n} e^{-  i \lambda \langle x , \theta \rangle } V ( x ) w ( x, \lambda, - \omega ) dx \\
 &= a_n \lambda^{n-2} 
\int_{\RR^n} e^{-  i \lambda \langle x ,  \omega - \theta\rangle }
( 1 - e^{ - i \lambda \langle x , \omega \rangle } ) R_V ( \lambda ) 
( e^{ \lambda \langle \bullet, \omega \rangle} V) ( x )  dx , 
\end{split} 
\end{equation}
where $ a_n = ( 2 \pi)^{-n+1}/2i $. 

The scattering matrix is unitary for $ \lambda $ real, and from 
\eqref{eq:repa} we see that it continues meromorphically to all of $ \CC$.
Hence we have
\begin{equation}
\label{eq:scatu}
S_V  ( \lambda)^{-1}  = S_V ( \bar \lambda )^* , \ \ \lambda \in \CC . 
\end{equation}
Another symmetry comes from changing $ \lambda $ to $ - \lambda $:
\begin{equation}
\label{eq:scats}
S_V ( \lambda )^{-1}  = J S_V  ( - \lambda ) J , \ \ J f ( \theta ) := f ( - \theta ) .
\end{equation}
The operator $ S_V  ( \lambda ) - I $ is of trace class, and hence
$ \det  S_V ( \lambda ) $ is well defined. The following result, see \cite[Theorem 3.4]{res} or \cite{Zw}, is important
for the investigation of scattering resonances:

\begin{prop}
\label{p:scatdet}
Suppose that $ V \in L^\infty_{\comp } ( \RR^n ; \RR ) $, where $ 
n $ is odd. Then 
$ \det S_V ( \lambda )  $ is a meromorphic function of order $ n $.
More precisely, 
\begin{equation}
\label{eq:scatdet}
\det S_V ( \lambda ) = ( - 1 )^{m} 
\left( \prod_{ k = 1}^K \frac{  i \mu_k + \lambda } { i \mu_k - \lambda } \right)
\frac{ P ( - \lambda ) }{ P ( \lambda ) }\,, 
\end{equation}
where 
 $ \mu_k \geq 0 $, 
 $ - \mu_1^2 < -\mu_{2}^2 \leq \cdots \leq -\mu_K^2 \leq 0 $ are
the eigenvalues of $ P_V $, included according to multiplicity, 
$ P ( \lambda ) $ is entire and non-zero for $ \Im \lambda \geq 0 $, 
and
\begin{equation} 
\label{eq:Pla} 
| P ( \lambda ) | \leq C_\epsilon e^{ C_\epsilon r^{n + \epsilon } } , 
\ \ \ \ \text{for any $ \epsilon > 0 $.} 
\end{equation}
\end{prop}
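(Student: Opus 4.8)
The plan is to realise $\det S_V(\lambda)$ as a quotient of an entire function by its reflection, and then to read off both the factorisation \eqref{eq:scatdet} and the bound \eqref{eq:Pla} from the structure of that entire function.

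\emph{Step 1: an entire function vanishing at the resonances.} For $n$ odd the free resolvent $R_0(\lambda)=(-\Delta-\lambda^2)^{-1}$, holomorphic in $\{\Im\lambda>0\}$, extends to an entire family of operators $L^2_{\comp}(\RR^n)\to L^2_{\loc}(\RR^n)$, with kernel $e^{i\lambda|x-y|}$ times a finite sum $\sum_{0\le j\le (n-3)/2}c_{n,j}\,\lambda^{j}\,|x-y|^{\,j+2-n}$. Hence $\mathcal K(\lambda):=|V|^{\frac12}R_0(\lambda)\,|V|^{\frac12}\operatorname{sgn}V$ is an entire family of operators on $L^2(\RR^n)$ lying in the Schatten class $\mathcal S_p$ for $p=(n+1)/2$ (the kernel singularity $|x-y|^{2-n}$ is $\mathcal S_p$-admissible precisely for $p>n/2$). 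I would put $D(\lambda):=\det\nolimits_p(I+\mathcal K(\lambda))$, the $p$-regularised Fredholm determinant: it is entire, and by the Birman--Schwinger principle together with $R_V=R_0-R_0VR_V$, its zeros, counted with multiplicity, are exactly the resonances. By Rellich's uniqueness theorem $D$ has no zero on $\RR\setminus\{0\}$, and its zeros in $\{\Im\lambda>0\}$ are precisely $\{i\mu_k\}$, with $-\mu_k^2$ the negative eigenvalues of $P_V$. Pulling these out, and absorbing a possible zero at $0$ by allowing $\mu_K=0$, gives $D(\lambda)=\pm\bigl(\prod_{k=1}^K(i\mu_k-\lambda)\bigr)P(\lambda)$ with $P$ entire and zero-free on $\{\Im\lambda\ge0\}$.

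\emph{Step 2: Birman--Krein and \eqref{eq:scatdet}.} Next I would prove $\det S_V(\lambda)=(\pm1)\,D(-\lambda)/D(\lambda)$. On $\RR\setminus\{0\}$ this is the classical Birman--Krein formula: $\det S_V(\lambda)$ equals the ratio of the two boundary values of the perturbation determinant, which is $\overline{D(\lambda)}/D(\lambda)$ up to a sign from the $p$-regularisation, and $\overline{D(\lambda)}=D(-\lambda)$ there because $|V|^{\frac12}$ and $\operatorname{sgn}V$ are real while $\overline{R_0(\lambda)}=R_0(-\lambda)$ on the real axis. (Alternatively the identity can be derived directly from \eqref{eq:repa}, writing $S_V(\lambda)-I$ as a composition through $L^2(\supp V)$ and invoking \eqref{eq:scatu}--\eqref{eq:scats}.) As both sides are meromorphic, the identity holds on $\CC$ -- the polynomial-in-$\lambda$ correction coming from $\det\nolimits_p$ versus $\det$ being harmless, e.g.\ absorbed into $P$ -- and substituting the product from Step 1 produces exactly \eqref{eq:scatdet}, with $(-1)^m$ the accumulated sign.

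\emph{Step 3: the growth bound, and the main obstacle.} The orders of $\det S_V$, of $D$ and of $P$ coincide, so \eqref{eq:Pla} is equivalent to $\log|D(\lambda)|\le C_\epsilon|\lambda|^{n+\epsilon}$, i.e.\ (by Jensen's formula) to the polynomial bound $N(r)\le C_\epsilon r^{n+\epsilon}$. This is the hard step. The direct route -- bound $\log|\det\nolimits_p(I+K)|$ by $C_p\|K\|_{\mathcal S_p}^{p}$ and estimate $\|\mathcal K(\lambda)\|_{\mathcal S_p}$ from the explicit kernel -- only yields growth exponential in $|\Im\lambda|$, which says nothing about the order; one must exploit oscillation. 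The right approach is to estimate the trace norm of $S_V(\lambda)-I$ via \eqref{eq:repa}, using the mapping properties of the Fourier restriction/extension operators onto spheres of radius $|\lambda|$ (together with the prefactor $\lambda^{n-2}$ in \eqref{eq:repa}, these combine to overall polynomial growth of degree $n$) and the polynomial bounds on the cut-off resolvent $\chi R_V(\lambda)\chi$ valid away from the resonance set, and then to propagate the resulting control from a neighbourhood of $\RR$, where $|\det S_V(\lambda)|=1$, into the lower half-plane by a Phragm\'en--Lindel\"of / minimum-modulus argument. This quantitative estimate is precisely the sharp polynomial resonance bound of \cite{z}, and is where the boundedness and compact support of $V$ are essential; everything else is bookkeeping with the resolvent identity and unitarity.
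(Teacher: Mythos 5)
The paper offers no proof of Proposition \ref{p:scatdet}: it is quoted from \cite[Theorem 3.4]{res} and \cite{Zw}. Your outline reconstructs exactly the argument of those references --- a regularized Fredholm determinant $D(\lambda)=\det\nolimits_p(I+\mathcal K(\lambda))$ of the Birman--Schwinger operator whose zeros are the resonances, the Birman--Krein identity $\det S_V(\lambda)=\pm\, D(-\lambda)/D(\lambda)$, and the factorization \eqref{eq:scatdet} after extracting the zeros in the closed upper half-plane --- and you correctly locate all of the genuine analytic content in the growth bound \eqref{eq:Pla}, i.e.\ in the sharp polynomial counting bound of \cite{z} propagated by a minimum-modulus/Phragm\'en--Lindel\"of argument as in \cite{Zw}. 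That last step is a plan rather than a proof in your write-up, but it is the right plan, and it is exactly where $V\in L^\infty_{\comp}$ enters. One point you treat too lightly: the discrepancy between $\det\nolimits_p$ and the genuine determinant is not a polynomial but an entire odd function $g(\lambda)$ built from (regularized) traces of powers of $\mathcal K(\lambda)$, which a priori grows like $e^{C|\Im\lambda|}$ in the lower half-plane; showing that $e^{g}$ can be absorbed into $P$ compatibly with \eqref{eq:Pla} requires the same Step-3 machinery (unitarity on $\RR$, the counting bound, and Phragm\'en--Lindel\"of) and is not mere bookkeeping. Similarly, matching the order of vanishing of $D$ with the resonance multiplicity is a standard but nontrivial Gohberg--Sigal argument worth acknowledging.
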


The power $ m $ in \eqref{eq:scatdet} 
is the multiplicity of the zero resonance, $ m = 0 $ or $ 1 $ 
for $ n =1,3$ and $ m = 0 $ for for $ n \geq 5$; see \cite[\S 3.3]{res} and \cite{JK}. 
 
We make the following observation based on the second representation 
in \eqref{eq:repa}:
\begin{equation}
\label{eq:longright}
 \text{ $ \lambda $ is a pole of $ \det S_V $ } \Longrightarrow 
\text{ $ \lambda $ is a pole of $ S_V $ } \Longrightarrow \text{ $ \lambda$
is a pole of $ R_V $.} \end{equation}
A more precise statement is possible (see \cite[Theorem 3.42]{res}) but 
we do not need it here. To show existence of poles of $ R_V $ we only 
need to show existence of poles of $ \det S_V $.

\subsection{A trace formula} 
The tool connecting the scattering matrix to the heat trace is the
Birman--Krein trace formula. In \S \ref{heat} we will recall the argument
showing that 
$ e^{-t P_V } - e^{-tP_0 } $ is of trace class. 
\begin{prop}
\label{p:bk}
Suppose that $ V \in L^\infty_{\comp} ( \RR^n ; \RR ) $. 
Then, in the notation of Proposition \ref{p:scatdet}, 
\begin{equation}
\label{eq:bk} 
\tr ( e^{- t P_V } - e^{- t P_0 } ) = 
\frac{1}{ 2\pi i } \int_0^\infty \tr \left( S_V (  \lambda )^{-1} \partial_\lambda 
S_V ( \lambda ) \right) e^{- t \lambda^2 } d \lambda  + \sum_{k=1}^K e^{ t \mu_k^2} + 
{\textstyle{\frac12} } m .
\end{equation}
\end{prop}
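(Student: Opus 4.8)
The identity \eqref{eq:bk} is the Birman--Krein trace formula for the pair $(P_V,P_0)$, and the plan is to recall the construction of the spectral shift function and then line up its three pieces -- negative spectrum, absolutely continuous spectrum, and threshold -- against the three terms on the right-hand side.

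Granting, as will be shown in \S\ref{heat}, that $e^{-tP_V}-e^{-tP_0}$ is of trace class for every $t>0$, the Birman--Krein theory furnishes a real-valued spectral shift function $\xi\in L^1_{\loc}(\RR)$, supported in $[-\mu_1^2,\infty)$, with $\tr\bigl(f(P_V)-f(P_0)\bigr)=\int_\RR f'(\lambda)\,\xi(\lambda)\,d\lambda$ for a suitable class of test functions. I would first check that $f(\lambda)=e^{-t\lambda}$ is admissible here: $f'$ decays rapidly at $+\infty$, while $\xi$ is bounded below in its support and grows at most polynomially on $(0,\infty)$ -- the latter by \eqref{eq:Pla}, or by standard high-energy bounds on $S_V-I$ -- so the integral converges absolutely. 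Plugging in $f=e^{-t\cdot}$ and integrating by parts over $(-\infty,0)$ and over $(0,\infty)$ separately reduces everything to identifying $\xi$ on each interval, modulo boundary contributions at $\lambda=0$.

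On $(-\infty,0)$ the free operator contributes nothing spectrally, so $\xi$ there is minus the eigenvalue counting function of $P_V$; the integral over this interval then produces $\sum_{k=1}^K e^{t\mu_k^2}$ together with a boundary term $\xi(0^-)$. On $(0,\infty)$ the Birman--Krein identity $\det S_V(\sqrt E)=e^{-2\pi i\,\xi(E)}$, for the branch of $\log\det S_V$ consistent with the factorization in Proposition \ref{p:scatdet}, gives $\xi'(E)=-\tfrac1{2\pi i}\,\partial_E\log\det S_V(\sqrt E)$; integrating by parts and substituting $E=\lambda^2$ turns the integral over this interval into $\tfrac1{2\pi i}\int_0^\infty e^{-t\lambda^2}\tr\bigl(S_V(\lambda)^{-1}\partial_\lambda S_V(\lambda)\bigr)\,d\lambda$ together with a boundary term $-\xi(0^+)$.

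It then remains to show that the leftover boundary contribution $\xi(0^-)-\xi(0^+)$ equals $\tfrac12 m$. This is a Levinson-type relation at the bottom of the continuous spectrum: the jump of $\xi$ across $\lambda=0$ is half the multiplicity $m$ of the zero resonance recorded in Proposition \ref{p:scatdet}, the factor $\tfrac12$ arising because $0$ is an endpoint of the absolutely continuous spectrum rather than an interior point. Pinning this down -- reconciling the value of $\det S_V$ at $0$ and the one-sided limits of $\xi$ with the structure of the generalized zero-energy null space of $P_V$, and keeping track of the $n=1,3$ versus $n\ge5$ and even/odd distinctions -- is the one genuinely delicate step; elsewhere the argument is bookkeeping on a classical formula. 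Assembling the three contributions gives \eqref{eq:bk}.
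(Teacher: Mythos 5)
The paper does not actually prove Proposition \ref{p:bk}: it cites \cite{cdv1} for $n=3$, \cite{Gui} for $n\ge 5$ (smooth $V$), and \cite[\S 3.8, \S 4.6]{res} for $V\in L^\infty_{\comp}$. So the comparison here is really with the standard literature, and your outline --- Krein's formula $\tr(f(P_V)-f(P_0))=\int f'\xi$, with $\xi=-N$ on the negative axis, $\det S_V=e^{-2\pi i\xi}$ on the continuous spectrum, and a threshold correction at $\lambda=0$ --- is the classical spectral-shift-function route and is consistent in structure (and in all the sign conventions I checked) with what those references do. The bookkeeping on $(-\infty,0)$ and the change of variables $E=\lambda^2$ on $(0,\infty)$ are fine.

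That said, the two steps you treat most lightly are exactly where the content of the proposition lives. First, the existence of $\xi$ and the Birman--Krein identity $\det S_V=e^{-2\pi i\xi}$ are stated for trace-class perturbations, and $V$ is not trace class here; one must run the invariance principle through the function $\lambda\mapsto e^{-t\lambda}$ (the trace-class pair being the heat semigroups themselves) to define $\xi$ and to transfer the identity with the scattering matrix \eqref{eq:scat} normalized at energy $\lambda^2$. You invoke this as a black box, which is defensible but means the ``classical formula'' you are doing bookkeeping on is itself the theorem. Second, the threshold analysis: before integrating by parts on $(0,\infty)$ you need $\tr\bigl(S_V(\lambda)^{-1}\partial_\lambda S_V(\lambda)\bigr)$ to be integrable near $\lambda=0$ and $\xi(0^{\pm})$ to exist, which for $V\in L^\infty_{\comp}$ requires the Jensen--Kato low-energy resolvent expansion \cite{JK}; and the identification of the jump is not quite $\xi(0^-)-\xi(0^+)=\frac12 m$ as you wrote, because the sum $\sum_{k=1}^K e^{t\mu_k^2}$ in \eqref{eq:bk} includes possible zero eigenvalues ($\mu_k=0$), which do not arise from the integral over $(-\infty,0)$ and must also be produced by the jump of $\xi$ at the origin. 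You correctly flag the threshold step as the delicate one, but it is left entirely open, and the polynomial bound on $\xi$ at high energy should be sourced from resolvent or Weyl-type estimates on the scattering phase rather than from \eqref{eq:Pla}, which controls the entire function $P$ off the real axis and does not directly bound $\xi$ on $\RR_+$.
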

If $V\in C_{\comp}^\infty$, this is proved for $ n = 3 $ in \cite{cdv1}, and for $n\ge 5$ in
\cite{Gui} and references given there. The proofs for $ V \in L^\infty_{\comp} $
can be found in \cite[\S 3.8, \S 4.6]{res}.

Since $ |\det S_V ( \lambda ) | = 1 $ for $ \lambda \in \RR $  (which follows 
from \eqref{eq:scatu}, the unitarity of 
the scattering matrix) we can define
the winding number of the scattering phase: 
\[  \sigma ( \lambda ) := \frac{1}{2 \pi i } \log \det S_V ( \lambda ) , \ \  \ \  
\sigma' ( \lambda ) = \frac{1 }{ 2 \pi i }  \tr \left( S_V (  \lambda )^{-1} \partial_\lambda 
S_V ( \lambda ) \right) , \ \ \lambda \in \RR.  \]
In the case of $ V \in \CIc ( \RR^n , \RR ) $, $ n $ odd, $\sigma(\lambda)$
admits a full asymptotic expansion for $\lambda\rightarrow\infty$, with only odd powers of $ \lambda $ except
for the constant term. When $ n = 3 $,
\[  \theta ( + \infty ) - \theta ( 0 ) = K + {\textstyle{\frac12}} m , \quad
\theta ( \lambda ) := \sigma ( \lambda ) + \lambda \left(
{\textstyle{\frac12}}  \int_\RR V ( x ) dx \right) , \]
and for $ n \geq 5$, 
\[ \theta ( + \infty ) - \theta ( 0 ) = K , \quad
 \theta ( \lambda ) := \sigma( \lambda ) - \sum_{k=1}^{\frac{n-1}2}  c_k ( V ) \lambda^{n-2k}\,, \]
where $ c_k ( V ) $ are the coefficients in the expansion of $ \sigma ( \lambda ) $. For proofs see \cite{cdv1}, \cite{Gui}, \cite[\S 3.7]{res}, and for less regular potentials
but fewer expansion terms \cite{J}.

\subsection{Proof of Theorem \ref{th:resb}}
\label{pfth}
If $ V $ has no resonances
then Proposition \ref{p:scatdet}
shows that
\[  \det S_V ( \lambda ) = 
\frac{ P ( - \lambda ) }{ P ( \lambda ) }, 
\]
where  $ P ( \lambda ) $ is an entire function with no zeros and 
of order $ n $. This implies that $ P ( \lambda ) = e^{G ( \lambda ) }$
where $  G( \lambda ) $ is a polynomial of degree at most $ n $; see for instance \cite[8.24]{Tit}.
Defining the odd polynomial 
 $ g ( \lambda ) = ( G ( - \lambda ) - G ( \lambda ))/(2\pi i ) $, we obtain
\[ \det S_V ( \lambda ) =  e^{ 2 \pi i  g ( \lambda ) } ,
 \qquad
 \sigma' ( \lambda ) = g' ( \lambda )  . \]
 The unitarity of $ S_V ( \lambda ) $ for $ \lambda $ real 
 shows that $ g ( \lambda ) $ has real coefficients, 
 $ g ( \lambda ) = a_0 \lambda^n + a_{1} \lambda^{n-2} + \cdots + a_{\frac{n-1}2} \lambda $.
 Hence,
 \begin{equation}  
 \label{eq:bj0} \int_0^\infty \sigma' ( \lambda ) \, e^{ - t \lambda^2 } d \lambda 
\, = \,  t^{-\frac n 2} \sum_{j=0}^{\frac{n-1}2} a_j' t^j 
 ,\end{equation}
where $ a_n' := a_n \Gamma ( n/2 - j + 1 )$.

We now insert \eqref{eq:bj0} into the trace formula \eqref{eq:bk} to 
see that $ t^{n/2} \tr ( e^{ - t P_V } - e^{-tP_0 } ) $ has a full
asymptotic expansion $ \sum_{j=0}^\infty c_j t^j $ as $ t \to 0 + $.
That means that the assumption of Theorem \ref{th:heat} is satisfied,
and hence $ V \in \CIc ( \RR^n ; \RR ) $. But the result of \cite{SZ}
(see also \cite[\S 3.7]{res})
then contradicts 
our assumption that $ V$ has no resonances: every nonzero potential in $ \CIc(\RR^n,\RR) $
has to have infinitely many resonances. 

Christiansen's
argument \cite{Ch} that there must be at least one resonance for nonzero $ V \in \CIc ( \RR^n; \RR ) $ is simple and elegant, and 
we reproduce it here. As above, absence of resonances would imply 
that $ \sigma' ( \lambda ) = a_0' \lambda^{n-1} + a_1' \lambda^{n-3} + \cdots
a_n' $. Comparison with the heat expansion shows that
$ a_2' = c_n \int V^2 \neq 0 $. That immediately provides a contradiction 
in the case of $ n = 3$. When $ n \geq 5 $ we use the representation 
\eqref{eq:repa}:
\[ \begin{split} \sigma' ( \lambda ) & = 
\tr S_V ( \lambda)^* \partial_\lambda S_V ( \lambda ) \\
& = \int_{ \SP^{n-1} } \partial_\lambda a ( \lambda, 
\theta, \theta ) d \theta + \int_{\SP^{n-1}} \int_{\SP^{n-1}}
\overline { a ( \lambda, \omega, \theta )  } \partial_\lambda a 
( \lambda , \omega, \theta ) d \omega d \theta . \end{split} \]
Under the assumption that $ R_V $ is holomorphic, that is no poles, \eqref{eq:repa}
then shows that $ \sigma' ( \lambda ) = \mathcal O ( \lambda^{n-3} ) $ as 
$ \lambda \to 0 $. But this contradicts $ a_2' \neq 0 $, since 
that would imply a lower order of vanishing at $ \lambda = 0 $.

We now use Theorem \ref{thm:main} to show that if  
$ V \in L^\infty_{\comp}(\RR^n,\RR)\cap H^{\frac{n-3}2}(\RR^n) $, then $ R_V $ has infinitely many poles.
This is again seen by contradiction, by assuming that $ \det S_V ( \lambda ) $
has only finitely many resonances. In that case, let
$ - \mu_1^2 <  -\mu_2^2 \leq \cdots \leq -\mu_{K'}^2 < 0 , $ 
$ \mu_k > 0 $, denote the negative eigenvalues of $ P_V $, and  let $
i \rho_j $, $ \rho_j < 0 $, $ j =1 , \ldots , J_1$, 
$ \lambda_j  \neq - \bar \lambda_j$, $ j = 1, \ldots, J_2 $ 
the remaining finite set of resonances. Proposition \ref{p:scatdet} gives
\begin{equation*}
\det S_V ( \lambda ) = ( -1 )^m e^{ g ( \lambda ) }
\prod_{k=1}^{K'} \frac{ \lambda + i\mu_k }{ \lambda - i \mu_k } 
\prod_{j=1}^{J_1} \frac{ \lambda + i \rho_j } { \lambda - i \rho_j } 
\prod_{j=1}^{J_2} \frac{ \lambda - \bar \lambda_j}{\lambda - \lambda_j } 
\frac{ \lambda -  \lambda_j}{\lambda + \bar \lambda_j } .
\end{equation*}
Hence for $ \lambda \in \RR $, 
\begin{equation}
\label{eq:expsig}
\begin{split}
 \sigma' ( \lambda )  - g'  ( \lambda ) & =  - \frac{1}{\pi} \sum_{k=1}^{K'} \frac{
 \mu_j} { \lambda^2 + \mu_j^2 } 
-  \frac{1}{ \pi} \sum_{j=1}^{J_1 } \frac{ \rho_j } { \lambda^2 + \rho_j^2 } 
\\
& \ \ \ \ \  -  \frac{1}\pi \sum_{j=1}^{J_2} \left( \frac{  \Im \lambda_j} { | \lambda - \lambda_j |^2} + \frac{ \Im \lambda_j } { | \lambda +  \lambda_j|^2 } \right),
\end{split}
\end{equation}
That implies that 
\begin{equation}
\label{eq:int0}  \int_0^\infty \left(  \sigma' ( \lambda )  - g'  ( \lambda ) \right)
d\lambda = 
 \textstyle{ - \frac12 K' + \frac12 J_1 + J_2 }. \end{equation}
where $K'\le K$ is the number of negative eigenvalues.
We compare this with Proposition \ref{p:bk} and
the expansion in Theorem \ref{thm:main}: if $ V \in L^\infty_{\comp}(\RR^n,\RR) \cap 
H^{\frac{n-3}2}(\RR^n) $, then \eqref{eqn:asymptotics} shows that
\[\tr( e^{ - t P_V } - e^{-tP_0 } ) \, = \, \sum_{k=1}^{\frac{n-1}2} c_k' t^{-\frac n 2 + k} + \mathcal O ( t^{\frac12} ) . \]
In particular, 
\begin{equation}
\label{eq:int1}  
 \tr( e^{ - t P_V } - e^{-tP_0 } ) \, - \, \sum_{k=1}^{\frac{n-1}2} c_k'\, t^{-\frac n 2 + k} \rightarrow 0 , \ \  \ \ t \to 0 + \,. \end{equation}
Since the terms on the right hand side of \eqref{eq:expsig} 
make bounded contributions, comparison with \eqref{eq:bk} shows that
\[ \sum_{k=1}^{\frac{n-1}2} c_k'\, t^{ -\frac n 2 + k} = \frac{1}{ 2 \pi i } 
\int_0^\infty g' ( \lambda ) e^{ - t \lambda^2 } dt . \]
Using \eqref{eq:bk} and \eqref{eq:int0} we obtain
\[ \begin{split}  
 \tr( e^{ - t P_V } - e^{-tP_0 } ) - \sum_{k=1}^{\frac{n-1}2} c_k'\, t^{-\frac n 2 + k}  & = 
 \tr( e^{ - t P_V } - e^{-tP_0 } ) - \frac{1}{ 2 \pi i } 
\int_0^\infty g' ( \lambda )e^{-t \lambda^2} d \lambda 
\\
& = \frac 1 { 2 \pi i } 
\int_0^\infty ( \sigma' ( \lambda ) - g'( \lambda) )e^{ - \lambda^2 t } \,
d \lambda + \sum_{ k=1}^K e^{ \mu_k ^2 t } + \textstyle{\frac12}  m \,.
\end{split}
 \]
Taking the limit as $t\rightarrow 0+$ we obtain
\[
\int_0^\infty ( \sigma' ( \lambda ) - g'( \lambda) ) \,
d \lambda + K + {\textstyle{\frac12}} m 
\, = \, K - {\textstyle{\frac12}} K' + {\textstyle{\frac12}} m + 
{\textstyle{\frac12}}  J_1 + J_2 > 0 \,.
\]
But this contradicts \eqref{eq:int1}.

\subsection{Why not infinitely many?}
\label{whynot}
A frustrating aspect of the argument in \S \ref{pfth} is that for 
$ V \in L^\infty_{\comp}(\RR^n,\RR) $, $ n \ge 5 $,
it only shows existence of {\em one} resonance. The reason for that is
the strong assumption in Theorem \ref{th:heat}. If we allowed, for example, a unique (non-zero) resonance $ \lambda_0 = i \rho_0 $ 
(it has to be purely imaginary, as the symmetry $ \lambda 
\mapsto - \bar \lambda $ would otherwise imply that there are two)
then the factorization argument above would imply
\[  \det S_V ( \lambda ) = e^{ 2 \pi i g (\lambda ) } 
\frac{ i \rho_0 + \lambda }{ i \rho_0 - \lambda } , 
\qquad \sigma' ( \lambda ) = g ( \lambda ) - \frac 1 \pi \frac{ \rho_0} { \lambda^2 + \rho_0^2} .
\]
We now note that 
\begin{equation}
\label{eq:bj}
\frac{1}{\pi} \int_0^\infty \frac{ e^{-s r^2} }{ 1 + r^2 } dr 
\sim {\textstyle{\frac12}} e^{ s } + s^{\frac 12} \sum_{j=0}^\infty b_j s^j , \ \ s \to 0 +. 
\end{equation}
To see \eqref{eq:bj}, let $ I( s) := (1/\pi) \int_0^\infty e^{ -s ( 1 + r^2) }/
( 1 + r^2 ) dr $. Then the right hand side of \eqref{eq:bj} is $
e^{s} I ( s ) $, while $ \partial_s I ( s ) = - (1/\pi) \int_0^\infty 
e^{ - s ( 1 + r^2 )} d r = \alpha e^{-s} / s^{\frac12} $, $ \alpha = 1/2\sqrt \pi $.
Hence $ I ( s ) = I ( 0 ) + \alpha \int_0^s e^{-s_1} s_1^{-\frac12} \sim 
\frac12 +  s^{\frac12} \sum_{ j= 0}^\infty b_j' s^j $. Multiplying by $ e^s $ gives \eqref{eq:bj}.

Inserting \eqref{eq:bj} into the trace formula \eqref{eq:bk}, and 
noting that if $ \rho_0 > 0 $ we have an eigenvalue, gives
\[  \tr ( e^{- t P_V } - e^{ - t P_0 } ) = t^{-n/2} \, \sum_{ j=1}^\infty 
a_j t + \textstyle{\frac{1}2} e^{ \rho_0^2 t } , \]
and we {\em cannot} use Theorem \ref{th:heat} to conclude that $ V $ is
smooth. The same problem arises if we assume that we have two (or more)
resonances, $\lambda_0 $, $  - \bar \lambda_0 $. 

The following simple example does not fit into our hypotheses, but it suggests
a possible complication. Consider $ n =1 $ and $ V = \delta_0 $. Then 
there is only one resonance, at $ \lambda = -2 i $, and the heat 
trace has an expansion with both integers and half-integers.

\section{Heat trace expansions}
\label{heat}

For $ P_V $ given by 
\eqref{eq:PV} with $ V \in L^\infty_{\comp } ( \RR^n ; \CC ) $, it is well known that $e^{-tP_V}-e^{-tP_0}$ is trace class for $t>0$, and if $V\in C_{\comp}^\infty$ it is known that
$\tr\bigl(e^{-tP_V}-e^{-tP_0}\bigr)$ admits a full asymptotic expansion 
-- see for instance \cite{vdb} and references given there.

Theorem \ref{th:heat} is a consequence of a converse result 
that gives a sharp relation between existence of 
a finite expansion for the trace, and a given finite order of Sobolev regularity for $V$, assuming that $V$ is real-valued.

\begin{theo}\label{thm:main}
Suppose that $V\in L^\infty_{\comp}(\RR^n,\RR)$, and that for some
 $m\in\mathbb N$ one can write
\begin{equation}\label{eqn:asymptotics}
\tr\bigl(e^{-tP_V}-e^{-tP_0}\bigr)=(4\pi t)^{-\frac n2}
\Bigl(c_1 t+c_2 t^2+\cdots+c_{m+1}t^{m+1}+r_{m+2}(t)t^{m+2}\Bigr)\,,
\end{equation}
where $|r_{m+2}(t)|\le C$ for $0<t\le 1$. Then $V\in H^m(\RR^n)$. Conversely, if $V\in H^m(\RR^n)$ then \eqref{eqn:asymptotics} holds with such an 
$r_{m+2}(t)$, and $\lim_{t\rightarrow 0^+}r_{m+2}(t)=c_{m+2}$ exists.
\end{theo}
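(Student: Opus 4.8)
\textbf{Proof strategy for Theorem \ref{thm:main}.}
The plan is to set up an iterative (bootstrap) scheme based on the Duhamel/resolvent expansion of $e^{-tP_V}-e^{-tP_0}$, controlling each successive term by the preceding level of Sobolev regularity of $V$. Writing $e^{-tP_V}-e^{-tP_0}$ via the Duhamel formula, one obtains a Neumann-type series whose $j$-th term is, schematically,
\[
(-1)^j\int_{\Delta_j} e^{-s_0 tP_0}\,V\,e^{-s_1 tP_0}\,V\cdots V\,e^{-s_j tP_0}\,ds
\]
integrated over the simplex $\Delta_j=\{s_i\ge0,\ \sum s_i=1\}$. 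Taking the trace, cyclicity collapses this into an integral of $\tr\bigl(V e^{-u_1 tP_0}\cdots V e^{-u_j tP_0}\bigr)$, which after Fourier transform becomes an explicit $j$-fold convolution integral of $\widehat V$ against Gaussians. The leading small-$t$ behavior of the $j$-th term is $(4\pi t)^{-n/2}t^j$ times a coefficient that is a constant multiple of $\int |\widehat V(\xi)|^2\,(\text{polynomial in }\xi)\,d\xi$ for $j=2$, and more generally an integral of $\prod\widehat V$; the key point for the forward direction is that if $V\in H^m$ then all these integrals through order $j=m+1$ converge absolutely, the remainder $r_{m+2}(t)$ stays bounded, and in fact converges as $t\to0^+$ because the next coefficient $c_{m+2}$ is again an absolutely convergent integral (here one uses $V\in H^m\hookrightarrow$ control of $\widehat V$ weighted by $\langle\xi\rangle^m$, combined with the rapid decay of the Gaussian factors to absorb the remaining weights).

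For the converse — the genuinely new and harder direction — I would argue by induction on $m$, using the hypothesis \eqref{eqn:asymptotics} to extract a uniform bound on the order-$(m+1)$ term of the expansion. The crux is that after subtracting the lower-order terms (which by the inductive hypothesis $V\in H^{m-1}$ are already under control and contribute the stated powers of $t$), the coefficient of $(4\pi t)^{-n/2}t^{m+1}$ that must exist as a finite limit is, up to a nonzero constant and lower-order pieces, $\int |\widehat V(\xi)|^2 P_m(\xi)\,d\xi$ where $P_m$ is a specific polynomial whose top-degree part is $|\xi|^{2m}$. Boundedness of $r_{m+2}(t)$ forces this integral to be finite — more precisely, one uses a Tauberian/monotonicity argument: the Gaussian-regularized integrals $\int|\widehat V(\xi)|^2 e^{-t|\xi|^2}|\xi|^{2m}\,d\xi$ are monotone in $t$ and their boundedness as $t\to0^+$ yields $\int|\widehat V|^2\langle\xi\rangle^{2m}<\infty$, i.e. $V\in H^m$. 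The main obstacle is precisely disentangling the order-$(m+1)$ contribution: the $j$-fold terms with $j\ge3$ also contribute at order $t^{m+1}$ (when $j\le m+1$), and these involve products $\widehat V(\xi_1-\xi_2)\cdots\widehat V(\xi_j-\xi_1)$ that are not manifestly sign-definite, so one must show they are \emph{already controlled} by $V\in L^\infty_{\comp}\cap H^{m-1}$ and do not interfere with reading off $\|V\|_{H^m}$ from the $j=2$ term.

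To make the $j\ge3$ terms harmless one needs quantitative bounds showing that a $j$-fold term is bounded by a product of lower Sobolev norms of $V$ together with enough powers of $t$ to spare; this is where the Gagliardo–Nirenberg–Moser interpolation inequalities advertised in the introduction enter. The idea is that in each $j$-fold integral one can distribute the total available derivative weight $2m$ among the $j$ factors of $\widehat V$ unevenly, landing at most $m-1$ derivatives on any single factor (since $j\ge3$ and the weights split), and then close the estimate by interpolating $\|V\|_{H^{m-1+\theta}}$ against $\|V\|_{L^\infty}$ and $\|V\|_{H^{m-1}}$ — the compact support of $V$ makes the $L^\infty$ bound usable. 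Thus only the quadratic ($j=2$) term carries the full $H^m$ information, and the hypothesis pins it down. Assembling these pieces: the forward direction is a convergence check, the converse is an induction whose inductive step combines (i) subtracting known lower-order terms, (ii) GNM-interpolation bounds rendering the $j\ge3$ terms negligible at the critical order, and (iii) a Tauberian monotonicity argument upgrading boundedness of the regularized quadratic integral to finiteness of $\|V\|_{H^m}$; once $V\in H^m$ for all $m$, Sobolev embedding gives $V\in\CIc$, recovering Theorem \ref{th:heat}.
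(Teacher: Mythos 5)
Your proposal follows essentially the same route as the paper: the Duhamel/Born expansion into terms $W_k(t)$, the observation that $\tr W_2(t)$ is a sign-definite Gaussian-regularized integral of $|\widehat V|^2$ from which $\|V\|_{H^m}$ is read off by a Fatou/monotone-convergence argument, an induction on $m$, and Gagliardo--Nirenberg--Moser interpolation to show the $k\ge 3$ terms are controlled by $\|V\|_{L^\infty}$ and $\|V\|_{H^{m-1}}$ alone so they cannot contaminate the critical order. This matches the paper's Propositions on $\tr W_2$ and $\tr W_k$ and its bootstrap scheme, so no substantive comparison is needed.
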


The proof of Therem \ref{thm:main} begins by using iteration to expand the heat kernel for $P_V=-\Delta+V$. The formula is
$$
e^{-tP_V}-e^{-tP_0}=\sum_{k=1}^\infty W_k(t)\,,
$$
where
\[
W_k(t)=\int_{0<s_1<\cdots\,<s_k<t}e^{-(t-s_k)P_0}\,V\,e^{-(s_k-s_{k-1})P_0}\,V\,\cdots\,V\,e^{-(s_2-s_1)P_0}\,V\,e^{-s_1P_0}ds_1\cdots ds_k\,.
\]
Convergence of the expansion in the $L^2$ operator norm  follows from $\|W_k(t)\|_{L^2\rightarrow L^2}\le \|V\|_{L^\infty}^kt^k/k!$, which holds since for all $s_j$ and $t$ the integrand is $L^2$ bounded by $\|V\|_{L^\infty}^k$, and the volume of integration is $t^k/k!$. 

We also have a bound on the trace class norm:
\begin{equation}
\label{eq:Wk-trace} 
 \|W_k(t)\|_{\mathcal L^1}\le C^k\,k^{\frac n2}\,t^{k-\frac n2}/k!, 
 \end{equation}
where $n$ is the dimension. For this we use that the trace class is an ideal, so it suffices to show that one pair of successive terms in the product has $\mathcal L^1$ bound
less than $C\,k^{\frac n2}\,t^{-\frac n2}$. We then observe that at least one of $t-s_k$, $s_{j+1}-s_j$ or $ s_1 $ is greater than $t/k$, and for that term we use the trace bound 
\begin{equation}
\label{eq:tracetrivial} 
 \|e^{-sP_0}\chi\|_{\mathcal L^1}\le C \,s^{-n/2} , 
 \end{equation}
 where $\chi\in \CIc$ is chosen to be $1$ on the support of $V$.
 To prove \eqref{eq:tracetrivial} we choose $ \chi_1 \in \CIc $
 equal to $ 1 $ on $ \supp \chi $. Then the explicit Schwartz kernel, $ K_1 ( x , y ) $ of 
 $ ( 1 - \chi_1 ) e^{-sP_0} \chi $ satisfies $ | \partial^{ \alpha} K_1 | 
 \leq C_{\alpha, N} s^N ( 1 + |x| + |y|)^{-N} $, for any $ \alpha $ and $ N$.
 Hence $\| ( 1 - \chi_1 ) e^{sP_0} \chi \|_{\mathcal L^1} = \mathcal O ( s^\infty )$.
 On the other hand, if $ K_2 ( x,y ) $ is the Schwartz kernel of $ e^{-sP_0/2} \chi_1 $
 then $ \int\!\!\int \!|K_2 ( x, y )|^2 dx dy \leq C s^{-n/2} $ which provides
an estimate $ \mathcal O ( s^{-n/4} ) $ on the Hilbert--Schmidt norm. These two 
bounds give 
 \eqref{eq:tracetrivial}:
 \[ \begin{split}  \|e^{-sP_0}\chi\|_{\mathcal L^1} & \le   C \| \chi_1 e^{ - s P_0} \chi_1 \|_{\mathcal L^1} + \| ( 1 - \chi_1 ) e^{ -sP_0} \chi \|_{\mathcal L ^1 } \\
 & \leq 
C\| \chi_1 e^{ -sP_0/2} \|_{ \mathcal L^2 } ^2  +  C_N s^N \leq 
  C \,s^{-n/2}. \end{split} \]

Using \eqref{eq:Wk-trace}, we see that 
$ e^{ - t P_V } - e^{ -t P_0 } $ is of trace class for $ t> 0 $.
The trace can be brought into the sum, and we write
$$
\tr\Bigl(e^{-tP_V}-e^{-tP_0}\Bigr)=\sum_{k=1}^\infty\tr\bigl(W_k(t)\bigr)\,.
$$
It is well known, and we include the proof, that
$$
\tr\bigl(W_1(t)\bigr)=(4\pi t)^{-\frac n2}\,t\,\int V(y)\,dy\,,
$$
which shows that $c_1=\int V$, and the expansion \eqref{eqn:asymptotics} is equivalent to
$$
\sum_{k=2}^\infty\tr\bigl(W_k(t)\bigr)=(4\pi t)^{-\frac n2}
\Bigl(c_2 t^2+\cdots+c_{m+1}t^{m+1}+r_{m+2}(t)\Bigr)\,.
$$
Theorem \ref{thm:main} will then follow as a result of the following two propositions that concern the asymptotics of the individual terms $\tr\bigl(W_k(t)\bigr)$.

\begin{prop}\label{thm:traceW2}
If $V\in L^\infty_{\comp}(\RR^n,\RR)\cap H^m(\RR^n)$, then one can write
\begin{equation}\label{eqn:traceW2}
\tr\bigl(W_2(t)\bigr)=(4\pi t)^{-\frac n2}
\Bigl(c_{2,2} t^2+\cdots+c_{2,2+m}t^{2+m}+\varepsilon(t)t^{2+m}\Bigr)\,,
\end{equation}
with $\lim_{t\rightarrow 0^+}\epsilon(t)=0$ and 
$c_{2,2+j}=a_j \||D|^jV\|_{L^2}$ for $0\le j\le m$,
for constants $ a_j \ne 0 $.

Conversely, assuming $V\in L^\infty_{\comp}(\RR^n,\RR)\cap H^{m-1}(\RR^n)$, if one can write
\begin{equation}\label{eqn:traceW2'}
\tr\bigl(W_2(t)\bigr)=(4\pi t)^{-\frac n2}
\Bigl(c_{2,2} t^2+\cdots+c_{2,1+m}t^{1+m}+r_{2,2+m}(t)t^{2+m}\Bigr)\,,
\end{equation}
where $|r_{2,2+m}(t)|\le C$ for $0<t\le 1$, then $V\in H^{m}(\RR^n)$, and hence \eqref{eqn:traceW2} holds.
\end{prop}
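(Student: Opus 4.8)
The plan is to reduce $\tr\bigl(W_2(t)\bigr)$ to an explicit one–dimensional integral of a completely monotone function, and then to read off both implications from elementary properties of Laplace transforms of positive measures; the real–valuedness of $V$ enters exactly through the positivity of that measure. \emph{Step 1: an exact formula.} Using cyclicity of the trace (legitimate since each factor $e^{-sP_0}$, $s>0$, composed with the compactly supported bounded $V$ is trace class) together with the semigroup law, the integrand of $W_2(t)$ reduces, after the change of variable $a=s_2-s_1$, to $\tr\bigl(e^{-(t-a)P_0}Ve^{-aP_0}V\bigr)$, which depends on $(s_1,s_2)$ only through $a$; the $s_1$–integration then contributes a factor $t-a$. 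Writing out the free heat kernels $p_s(z)=(4\pi s)^{-n/2}e^{-|z|^2/4s}$ and completing the square gives $p_a(z)\,p_{t-a}(z)=(4\pi t)^{-n/2}p_\tau(z)$ with $\tau=a(t-a)/t$, hence $\tr\bigl(e^{-(t-a)P_0}Ve^{-aP_0}V\bigr)=(4\pi t)^{-n/2}g(\tau)$ where
\[
g(\sigma):=\bigl\langle V,\,e^{-\sigma P_0}V\bigr\rangle_{L^2}=(2\pi)^{-n}\int_{\RR^n}|\widehat V(\xi)|^2\,e^{-\sigma|\xi|^2}\,d\xi\,,\qquad\sigma\ge0\,.
\]
Since $a\mapsto g\bigl(a(t-a)/t\bigr)$ is symmetric about $a=t/2$, the weight $t-a$ may be replaced by $t/2$, and the substitution $a=tu$ produces the identity
\[
\tr\bigl(W_2(t)\bigr)=(4\pi t)^{-\frac n2}\,\frac{t^2}{2}\int_0^1 g\bigl(t\,u(1-u)\bigr)\,du\,,
\]
the interchange of trace and $ds$–integration being justified by the trace–norm bound underlying \eqref{eq:Wk-trace}. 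The structural point is that $g(\sigma)=\int_0^\infty e^{-\sigma\rho}\,d\nu(\rho)$ is the Laplace transform of the positive measure $\nu$ obtained by pushing $(2\pi)^{-n}|\widehat V(\xi)|^2\,d\xi$ forward under $\xi\mapsto|\xi|^2$, with moments $\int_0^\infty\rho^j\,d\nu(\rho)=\||D|^jV\|_{L^2}^2$, finite precisely when $V\in H^j(\RR^n)$; here $V\in H^0$ holds automatically since $V\in L^\infty_{\comp}$.

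\emph{Step 2: the forward direction.} Assuming $V\in H^m$, I would Taylor–expand $e^{-\sigma|\xi|^2}$ to order $m$ with integral remainder; the remainder divided by $\sigma^m$ is then dominated by $|\widehat V(\xi)|^2|\xi|^{2m}/m!$, which is integrable, so dominated convergence gives $g(\sigma)=\sum_{j=0}^m\frac{(-\sigma)^j}{j!}\||D|^jV\|_{L^2}^2+o(\sigma^m)$ as $\sigma\to0^+$. Substituting $\sigma=t\,u(1-u)$ (so $\sigma\le t/4$, which keeps the $o$–term uniformly controlled), integrating in $u$, and using $\int_0^1\bigl(u(1-u)\bigr)^j\,du=(j!)^2/(2j+1)!$ yields \eqref{eqn:traceW2} with $\lim_{t\to0^+}\varepsilon(t)=0$ and with $c_{2,2+j}$ equal to the nonzero constant $\tfrac12(-1)^j j!/(2j+1)!$ times $\||D|^jV\|_{L^2}^2$.

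\emph{Step 3: the converse direction.} Assume $V\in H^{m-1}$ and that \eqref{eqn:traceW2'} holds with $|r_{2,2+m}(t)|\le C$. By Step 1, $(4\pi t)^{n/2}\tr\bigl(W_2(t)\bigr)=\tfrac{t^2}{2}\,G(t)$ with $G(t):=\int_0^1 g\bigl(t\,u(1-u)\bigr)\,du$, so \eqref{eqn:traceW2'} reads $G(t)=\sum_{j=0}^{m-1}2c_{2,2+j}\,t^j+O(t^m)$; comparison with Step 2 applied with $m-1$ in place of $m$ (legitimate since $V\in H^{m-1}$), together with uniqueness of asymptotic coefficients, forces $2c_{2,2+j}=\frac{(-1)^j}{j!}\frac{(j!)^2}{(2j+1)!}\||D|^jV\|_{L^2}^2$ for $0\le j\le m-1$, and hence
\[
G(t)-\sum_{j=0}^{m-1}\frac{(-t)^j}{j!}\,\frac{(j!)^2}{(2j+1)!}\,\||D|^jV\|_{L^2}^2=O(t^m)\,,\qquad t\to0^+\,.
\]
The crux — and the step I expect to be the main obstacle — is to convert this one–sided bound into finiteness of $\||D|^mV\|_{L^2}$. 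For this I would use the sign and monotonicity structure of the Taylor remainder of the exponential: $(-1)^k\bigl(e^{-x}-\sum_{j<k}\tfrac{(-x)^j}{j!}\bigr)=\psi_k(x):=\tfrac1{(k-1)!}\int_0^x(x-s)^{k-1}e^{-s}\,ds\ge 0$, and $\psi_k(x)/x^k$ is decreasing in $x$ with limit $1/k!$ as $x\to0^+$. Substituting $x=t\,u(1-u)\rho$ into the display above and integrating against $d\nu(\rho)\,du$ exhibits $(-1)^m t^{-m}$ times its left–hand side as
\[
\int_0^1\!\!\int_0^\infty\frac{\psi_m\bigl(t\,u(1-u)\rho\bigr)}{\bigl(t\,u(1-u)\rho\bigr)^m}\,\bigl(u(1-u)\bigr)^m\,\rho^m\,d\nu(\rho)\,du\,,
\]
whose integrand is nonnegative and increases monotonically as $t\downarrow0$; by monotone convergence this tends to $\frac1{m!}\frac{(m!)^2}{(2m+1)!}\int_0^\infty\rho^m\,d\nu(\rho)$. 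Since the displayed quantity is $O(1)$, this limit is finite, i.e. $\||D|^mV\|_{L^2}^2=\int_0^\infty\rho^m\,d\nu(\rho)<\infty$ and $V\in H^m$; then \eqref{eqn:traceW2} follows from Step 2. The computations in Steps 1--2 are routine; the content of the argument is this Tauberian passage, and it is precisely here that positivity of $\nu$ — equivalently, that $V$ be real — is used in an essential way.
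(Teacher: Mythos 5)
Your proposal is correct and follows essentially the same route as the paper: both reduce to the identity $\tr\bigl(W_2(t)\bigr)=(4\pi t)^{-n/2}\,\tfrac{t^2}{2}\int_0^1 g\bigl(t\,u(1-u)\bigr)\,du$ with $g(\sigma)=(2\pi)^{-n}\int e^{-\sigma|\xi|^2}|\widehat V(\xi)|^2\,d\xi$, Taylor-expand the exponential with a sign-controlled remainder, and exploit positivity of $|\widehat V|^2$ to pass to the limit in the converse direction. The only differences are cosmetic — you derive the identity by cyclicity of the trace and the semigroup law rather than by integrating the explicit Gaussian kernels over the diagonal, and your $\psi_m(x)/x^m$ with monotone convergence is exactly the paper's normalized remainder $r_m$ combined with Fatou's lemma.
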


\begin{prop}\label{thm:traceWk}
If $V\in L^\infty_{\comp}(\RR^n,\RR)\cap H^m(\RR^n)$, then for $k\ge 3$ one can write
\begin{equation}\label{eqn:traceWk}
\tr\bigl(W_k(t)\bigr)=(4\pi t)^{-\frac n2}
\Bigl(c_{k,k} t^k+\cdots+c_{k,k+m-1}t^{k+m-1}+r_{k,k+m}(t)t^{k+m}\Bigr)\,,
\end{equation}
where, for a constant $C$ depending on $k$ and $m$, for $0\le j\le m$,
$$
|c_{k,k+j}|\le C\,\|V\|_{L^\infty}^{k-2}\|V\|_{H^j}^2\,,\qquad
\sup_{0<t<1}|r_{k,k+m}(t)|\le C\,\|V\|_{L^\infty}^{k-2}\|V\|_{H^m}^2\,.
$$
\end{prop}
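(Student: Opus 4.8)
The plan is to evaluate $\tr(W_k(t))$ explicitly with the Gaussian heat kernel, rescale, and reduce the whole question to one multilinear estimate for integrals of at most $2m$ derivatives of $V$, to which a Gagliardo--Nirenberg--Moser inequality applies. First, writing $e^{-sP_0}(x,y)=(4\pi s)^{-\frac n2}e^{-|x-y|^2/4s}$, I would use cyclicity of the trace together with the semigroup property to merge the first and last Gaussian factors in $W_k(t)$, change variables from $(s_1,\dots,s_k)$ to the time gaps and then to gaps $t\sigma_j$ with $\sigma$ in the closed standard simplex $\Delta^{k-1}$, and finally center the spatial variables about the first one and rescale them by $\sqrt t$. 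This should give an identity of the form
\[
\tr\bigl(W_k(t)\bigr)=(4\pi t)^{-\frac n2}\,t^{k}\,C_{k,n}\int_{\Delta^{k-1}}\sigma_k\,h_\sigma\bigl(\sqrt t\,\bigr)\,d\sigma\,,\qquad h_\sigma(\rho):=\int_{\RR^n}\mathbb E_{\mu_\sigma}\Bigl[\,\prod_{j=1}^{k}V\bigl(x+\rho\,y_j\bigr)\Bigr]\,dx\,,
\]
where $y_1:=0$ and $\mu_\sigma$ is an explicit Gaussian probability law on $(y_2,\dots,y_k)$ — a Brownian-bridge type law with increments $\sigma_j$ — which is symmetric under $y\mapsto-y$ and all of whose moments $\mathbb E_{\mu_\sigma}[\prod_j|y_j|^{N_j}]$ are bounded uniformly for $\sigma\in\Delta^{k-1}$. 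Only $V\in L^\infty_{\comp}$ is used here, and every integral converges absolutely because $V$ is bounded with compact support (the $x$-integral runs over $\supp V$).

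Next, assume first $V\in\CIc(\RR^n;\RR)$, so $h_\sigma$ is smooth, and note that the $y\mapsto-y$ symmetry of $\mu_\sigma$ forces $h_\sigma$ to be an even function of $\rho$. Taylor expanding to order $2m-1$,
\[
h_\sigma(\rho)=\sum_{\ell=0}^{m-1}\frac{h_\sigma^{(2\ell)}(0)}{(2\ell)!}\,\rho^{2\ell}+\frac{\rho^{2m}}{(2m-1)!}\int_0^1(1-v)^{2m-1}h_\sigma^{(2m)}(v\rho)\,dv\,,
\]
so that \eqref{eqn:traceWk} holds with $c_{k,k+\ell}$ proportional to $\int_{\Delta^{k-1}}\sigma_k\,h_\sigma^{(2\ell)}(0)\,d\sigma$ and $r_{k,k+m}(t)$ proportional to $\int_{\Delta^{k-1}}\sigma_k\int_0^1(1-v)^{2m-1}h_\sigma^{(2m)}(v\sqrt t)\,dv\,d\sigma$. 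Since $\partial_\rho$ acts on $V(x+\rho y_j)$ as the constant-coefficient operator $y_j\!\cdot\!\nabla_x$, each $h_\sigma^{(p)}(\rho)$ is a finite, boundedly weighted sum of $\mathbb E_{\mu_\sigma}$ of integrals $\int_{\RR^n}\prod_j[(y_j\!\cdot\!\nabla)^{p_j}V](x+\rho y_j)\,dx$ with $\sum_j p_j=p$.

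The crux is then the multilinear bound
\[
\Bigl|\int_{\RR^n}\prod_{j=1}^{k}\bigl[\mathcal D_j V\bigr](x+c_j)\,dx\Bigr|\ \le\ C_{k,\ell}\,\|V\|_{L^\infty}^{k-2}\,\||D|^{\ell}V\|_{L^2}^{2}
\]
for arbitrary shifts $c_j$ and constant-coefficient differential operators $\mathcal D_j$ with $\sum_j\mathrm{ord}(\mathcal D_j)=2\ell$, $\ell\le m$: if some factor has order $>\ell$, one integrates by parts in $x$ to push the excess onto the other factors, ending with one factor of order exactly $\ell$ and the remaining $k-1$ factors of total order $\ell$ (hence each $\le\ell$); then H\"older's inequality with exponents $2\ell/\mathrm{ord}(\mathcal D_j)$ and the Gagliardo--Nirenberg--Moser inequality $\|D^{i}V\|_{L^{2\ell/i}}\lesssim\|V\|_{L^\infty}^{1-i/\ell}\||D|^{\ell}V\|_{L^2}^{i/\ell}$ ($0\le i\le\ell$) close the estimate (when every factor already has order $\le\ell$ no integration by parts is needed). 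Applying this with $\mathcal D_j=(y_j\!\cdot\!\nabla)^{p_j}$, absorbing the directional coefficients into factors $|y_j|^{p_j}$ uniformly in the shifts $\rho y_j$, and using the uniform moment bounds for $\mu_\sigma$, one gets $\sup_{\rho}|h_\sigma^{(2m)}(\rho)|\le C\|V\|_{L^\infty}^{k-2}\|V\|_{H^m}^2$ and $|h_\sigma^{(2\ell)}(0)|\le C\|V\|_{L^\infty}^{k-2}\|V\|_{H^\ell}^2$ for $\ell\le m-1$, all uniformly in $\sigma$; integrating over $\Delta^{k-1}$ (where $\sigma_k\le1$) yields exactly the asserted bounds on $c_{k,k+j}$ and on $\sup_{0<t<1}|r_{k,k+m}(t)|$.

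To remove the smoothness hypothesis, mollify: $V_\varepsilon\in\CIc$ with $\|V_\varepsilon\|_{L^\infty}\le\|V\|_{L^\infty}$ and $V_\varepsilon\to V$ in $H^m$ and in $L^1$; the left side of the identity above (for fixed $t>0$) and the coefficients $c_{k,k+j}$ are continuous in $V$ for these topologies, and the bounds above are uniform in $\varepsilon$, so the expansion and the estimates pass to the limit. The step I expect to be hardest is the multilinear estimate of the third paragraph — specifically, organizing the integration by parts so that no factor of $V$ ever carries more than $m$ derivatives, which is exactly what lets Gagliardo--Nirenberg--Moser produce the sharp pair $\||D|^{m}V\|_{L^2}^2$ rather than a single norm of order $2m$ that $V\in H^m$ would not control; relatedly, $h_\sigma^{(2m)}$ is only meaningful after that integration by parts, which is why the expansion is first established for smooth $V$. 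A minor technical point is the uniform boundedness of the moments of $\mu_\sigma$ up to the boundary of $\Delta^{k-1}$, which follows from the fact that the variances of a pinned Gaussian chain stay bounded.
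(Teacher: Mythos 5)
Your proposal is correct, and it shares the paper's overall architecture --- reduce $\tr\bigl(W_k(t)\bigr)$ to a $t$-dependent multilinear Gaussian form, establish the expansion first for $V\in\CIc$ with coefficients and remainder controlled by $\|V\|_{L^\infty}^{k-2}\|V\|_{H^j}^2$ via Gagliardo--Nirenberg--Moser, then mollify --- but the technical core is carried out by a genuinely different route. The paper passes to the Fourier side, writing $B_t(V)$ as $\int e^{-tQ_r(\eta')}\,\widehat V(\eta_k)\cdots\widehat V(\eta_2)\,\overline{\widehat V(\eta_2+\cdots+\eta_k)}$, expands $e^{-tQ_r}$ in powers of $t$, and relies on the algebraic observation that $Q(\eta')^j$ can be rewritten as a combination of monomials $\eta_k^{\alpha_k}\cdots\eta_2^{\alpha_2}(\eta_2+\cdots+\eta_k)^{\alpha_1}$ with every $|\alpha_i|\le j$; the remainder then requires the separate interpolation estimate \eqref{eqn:prodbound}, proved by splitting into two cases and invoking Young's inequality and Plancherel. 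You stay in physical space: the evenness of the pinned Gaussian chain under $y\mapsto -y$ replaces the Fourier representation as the mechanism that kills the half-integer powers of $\sqrt t$, and integration by parts in $x$ replaces the monomial decomposition of $Q^j$ as the mechanism ensuring no factor of $V$ carries more than $\ell$ derivatives --- your observation that at most one factor can initially exceed order $\ell$, and that unloading its excess leaves every other factor at order $\le\ell$, is exactly right and is the physical-space counterpart of the constraint $\eta_1+\cdots+\eta_k=0$. Your single shift-uniform multilinear bound, combined with the uniformly bounded moments of $\mu_\sigma$, then handles coefficients and remainder by one estimate, which is arguably tidier than the paper's two-pronged treatment. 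Two details to nail down in a write-up: the precise weight on the simplex (your $\sigma_k$) is just the Jacobian of passing to gap variables and only needs to be bounded; and in the mollification step the convergence of the coefficient integrals requires the derivative-free factors to converge in some $L^p$ with $p<\infty$ (available from $\|V_\epsilon\|_{L^\infty}\le\|V\|_{L^\infty}$ together with compact support), not merely in $L^1$ --- this is also how the paper closes that step.
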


The fact that $V\in L^\infty_{\comp}(\RR^n,\RR)\cap H^m(\RR^n)$ implies existence of the asymptotic expansion \eqref{eqn:asymptotics} of order $m+2$ is an easy consequence of the above propositions. By the bound $\|W_k(t)\|_{\mathcal L^1}\le C^k\,k^{\frac n2}\,t^{k-\frac n2}/k!$ we have that
\begin{equation}\label{eqn:traceerror}
\tr\sum_{k=m+3}^\infty W_k(t)\le C\,t^{m+3-\frac n2}\,,\qquad 0<t\le 1\,.
\end{equation}
On the other hand, Propositions \ref{thm:traceW2} and \ref{thm:traceWk} show that 
$$
\tr\sum_{k=1}^{m+2} W_k(t)=(4\pi t)^{-\frac n2}
\Bigl(c_1 t+c_2 t^2+\cdots+c_{m+1}t^{m+1}+c_{m+2}t^{m+2}+\varepsilon(t)t^{m+2}\Bigr)\,,
$$
where for $j\ge 2$ we have 
$
c_j=\sum_{k=2}^{j}c_{k,j}
$.
 
The other direction of Theorem \ref{thm:main}, that existence of an asymptotic expansion implies regularity, is carried out by induction. 
Assume $m\ge 1$ and $V\in L^\infty_{\comp}\cap H^{m-1}(\RR^n)$, which trivially holds if $m=1$ since $L^\infty_{\comp}\subset L^2(\RR^n)$. Assume
\eqref{eqn:asymptotics} holds. By \eqref{eqn:traceerror} this implies
$$
\tr\sum_{k=2}^{m+2} W_k(t)=(4\pi t)^{-\frac n2}
\Bigl(c_1 t+c_2 t^2+\cdots+c_{m+1}t^{m+1}+r_{m+2}(t)t^{m+2}\Bigr)\,,
$$
where $|r_{m+2}(t)|\le C$.

By Proposition \ref{thm:traceWk}, since $V\in L^\infty_{\comp}\cap H^{m-1}(\RR^n)$ the same relation holds, with different coefficients that can be bounded from $L^\infty$ and $H^j$ norm bounds for $V$ with $j\le m-1$, for $\tr\sum_{k=3}^{m+2}W_k(t)\,.$ Hence
the relation \eqref{eqn:traceW2'} holds, and we conclude $V\in H^m(\RR^n)$.

\subsection{Calculating $\tr \bigl(W_1(t)\bigr)$.}
We calculate the trace of $W_1(t)$ by integrating over the diagonal
$$
\tr\bigl(W_1(t)\bigr)=(4\pi)^{-n}\int_{\RR^n}\int_{\RR^n}\int_0^t
 (t-s)^{-\frac n2}s^{-\frac n2}
e^{-\frac {|x-y|^2}{4(t-s)}}\,V(y)\,e^{-\frac {|y-x|^2}{4s}}\,ds\,dx\,dy\,.
$$
The integral $dx$ is carried out
$$
\int_{\RR^n} e^{-\frac{|x-y|^2}4 \frac{t}{(t-s)s}}\,dx=(4\pi)^{\frac n2}t^{-\frac n2}(t-s)^{\frac n2}s^{\frac n2}
$$
leading to
$$
\tr\bigl(W_1(t)\bigr)=(4\pi t)^{-\frac n2}\,t\,\int V(y)\,dy\,.
$$
(From now on the integrals without integration limits will denote
integrals over $ \RR^n $.)

\subsection{Calculating $\tr \bigl(W_2(t)\bigr)$.}
Again we integrate over the diagonal to write $\tr\bigl(W_2(t)\bigr)$ as
$$
(4\pi)^{-\frac{3n}2} \int_{0<r<s<t}(t-s)^{-\frac n2}(s-r)^{-\frac n2}r^{-\frac n2}
e^{-\frac {|x-y|^2}{4(t-s)}-\frac{|y-z|^2}{4(s-r)}-\frac {|z-x|^2}{4r}}\,V(y)\,V(z)
\,dr\,ds\,dx\,dy\,dz\,.
$$
We let $u=t-s$ and $x_0=\bigl(\frac r{r+u}\bigr)\,y+\bigl(\frac u{r+u}\bigr)\,z$ and carry out the integral over $x$ by writing
\begin{equation}\label{quadident}
\frac {|x-y|^2}u+\frac {|z-x|^2}r=\frac {r+u}{ru}\,|x-x_0|^2+\frac 1{r+u}\,|y-z|^2
\end{equation}
which expresses $\tr\bigl(W_2(t)\bigr)$ as
$$
(4\pi)^{-n} \int_{\substack{r+u<t \\ 0<r,u}}
 (t-u-r)^{-\frac n2}(u+r)^{-\frac n2} e^{-{\frac{|y-z|^2}4}\bigl( \frac 1{t-u-r}+\frac 1{u+r} \bigr)}\,V(y)\,V(z)
\,dr\,du\,dy\,dz\,.
$$
Let $r=tv-u$, so $dr\,du=t\,dv\,du$, the integrand is then independent of $u$, the new limits are $0<u<tv$, $0<v<1$, and we get
$$
t^2\,(4\pi t)^{-n}\int\int\int_0^1
(1-v)^{-\frac n2}v^{-\frac n2+1}\, 
e^{-{\frac{|y-z|^2}{4t}}\frac 1{v(1-v)}}\,V(y)\,V(z)
\,dv\,dy\,dz\,.
$$
Since $V$ is real we can use the Plancherel theorem to write this as
$$
t^2\,(4\pi t)^{-\frac n2}\int_0^1 v\,\biggl((2\pi)^{-n}\int e^{-t(1-v)v|\xi|^2}\,\bigl|\widehat V(\xi)\bigr|^2\,d\xi\,\biggr)\,dv\,.
$$
By symmetry under $v\rightarrow 1-v$ we can also write this as
$$
\frac 12 \, t^2\,(4\pi t)^{-\frac n2}\int_0^1 \biggl((2\pi)^{-n}\int e^{-t(1-v)v|\xi|^2}\,\bigl|\widehat V(\xi)\bigr|^2\,d\xi\,\biggr)\,dv\,.
$$
The term in parentheses is continuous in $t$, and at $t=0$ equals $\|V\|_{L^2}^2$,
so
\begin{equation}
\label{eq:W21}
\tr\bigl(W_2(t)\bigr)=\frac 12\,t^2\,(4\pi t)^{-\frac n2}\,\Bigl(\,\|V\|_{L^2}^2+ \varepsilon(t)\Bigr)\,, \qquad \lim_{t\rightarrow 0^+} \varepsilon(t)=0\,.
\end{equation}
This settles the case $m=0$ of Theorem \ref{thm:main} which, since $L^\infty_{\comp}\subset H^0(\RR^n)=L^2(\RR^n)$, is nontrivial only for the existence of the
expansion \eqref{eqn:asymptotics} for $m=0$. It also shows that we can recover $\|V\|_{L^2}$ from $\lim_{t\rightarrow 0^+}r_2(t)$.

\noindent
{\bf Remark.}
If we were to assume $V$ is H\"older-$\alpha$, then to get van den Berg's bounds \cite{vdb} we would write 
$$
V(y)V(z)=\frac 12\Bigl(V(y)^2+V(z)^2-\bigl(V(y)-V(z)\bigr)^2\Bigr)
$$
and writing $|V(y)-V(z)|^2\le |y-z|^{2\alpha}$ would lead to a gain of $t^\alpha$ for the last term on the right; the other two terms would lead to the desired leading term, so we would get $\varepsilon(t)\lesssim t^\alpha$.

\subsection{Proof of Proposition \ref{thm:traceW2}}

First consider the case $m=1$, and suppose that we have an expansion
$$
\tr\bigl(W_2(t)\bigr)=
(4\pi t)^{-\frac n2}\bigl(c_2\,t^2+\mathcal{O}(t^3)\bigr)\,,\quad t\le 1\,.
$$
From \eqref{eq:W21} we must have $c_2=\frac 12\|V\|_{L^2}^2$.
This leads to the estimate
$$
\int_0^1\int \biggl(\frac{1-e^{-t(1-v)v|\xi|^2}}{t}\biggr)\,\bigl|\widehat V(\xi)\bigr|^2\,d\xi\,dv\le C\,, \qquad 0<t\le 1\,.
$$
The integrand is positive, so by Fatou's lemma we get
$$
\biggl(\int_0^1 (1-v)v\,dv\biggr)\int |\xi|^2\,\bigl|\widehat V(\xi)\bigr|^2\,d\xi\le C\,,
$$
implying that $V\in H^1(\RR^n)$. Conversely, if $V\in H^1(\RR^n)\cap L^\infty_{\comp}(\RR^n,\RR)$ we would get such an expansion by dominated convergence.

To consider higher values of $m$, write
\begin{equation}\label{eqn:expexp}
e^{-s}\;=\;\sum_{j=0}^{m-1}\frac{(-1)^j}{j!}\,s^j\,+\,r_m(s)\,\frac{(-1)^m}{m!}\,s^m\,,
\end{equation}
where $r_m(s)$ is a smooth function, and by the Lagrange form for the remainder,
\begin{equation}\label{errorterm}
0\le r_m(s)\le 1\;\;\;\text{if}\;\;\;s\ge 0\,,\qquad
r_m(0)=1\,,\qquad\partial_sr_m(0)=\frac{-1}{m+1}\,.
\end{equation}
Now suppose that $V\in H^m(\RR^n)$ for some $m\ge 1$. Then we can expand
\begin{multline*}
\int_0^1 \biggl(\int e^{-t(1-v)v|\xi|^2}\,\bigl|\widehat V(\xi)\bigr|^2\,d\xi\,\biggr)\,dv=
\sum_{j=0}^{m-1}\biggl(\frac 1{j!}\int_0^1 (1-v)^jv^j\,dv\biggr)
\biggl(\int |\xi|^{2j}\,\bigl|\widehat V(\xi)\bigr|^2\,d\xi\biggr)\,t^j\\
+\frac{(-1)^m}{m!}\biggl(\int_0^1\int r_m\bigl(t(1-v)v|\xi|^2\bigr)(1-v)^mv^m\,|\xi|^{2m}\bigl|\widehat V(\xi)\bigr|^2\,d\xi\,dv\biggr)\,t^m\,.
\end{multline*}
The coefficient of $t^m$ is continuous in $t$, and converges to $a_m\,\||D|^mV\|_{L^2}^2$ as $t\rightarrow 0$, where $a_m\ne 0$. Thus, if we can write
$$
\tr\bigl(W_2(t)\bigr)=(4\pi t)^{-\frac n2}\biggl(\;\sum_{j=0}^m\,c_j\,t^j+\mathcal{O}\bigl(t^{m+1}\bigr)\,\biggr)\,,\quad t\le 1\,,
$$
then $c_j=a_j\||D|^jV\|_{L^2}^2$ for $0\le j\le m$, and in addition we have
uniform bounds for $0<t\le 1$
$$
\int_0^1\int\biggl(\frac{1-r_m\bigl(t(1-v)v|\xi|^2\bigr)}{t}\biggr)\,(1-v)^mv^m|\xi|^{2m}\bigl|\widehat V(\xi)\bigr|^2\,d\xi\,dv\le C\,.
$$
Then by Fatou's lemma and \eqref{errorterm} we get
$$
\frac{1}{m+1}\biggl(\int_0^1 (1-v)^{m+1}v^{m+1}\,dv\biggr)\biggl(\int|\xi|^{2(m+1)}\bigl|\widehat V(\xi)\bigr|^2\,d\xi\,\biggr)\le C\,,
$$
so necessarily $V\in H^{m+1}(\RR^n)$, completing the proof of Proposition \ref{thm:traceW2}.

\subsection{Trace of $W_k(t)$ for $k\ge 3$}
To estimate products of derivatives, we will use the following particular case of the Gagliardo--Nirenberg--Moser inequalities.
\begin{lemm}\label{lem:GNM}
Suppose $\{\alpha_j\}_{j=1}^k$ are multi-indices, with $|\alpha_j|\le m$, and $\sum_j|\alpha_j|=2m$. If $u\in L^\infty(\RR^n)\cap H^m(\RR^n)$, then for a constant $C$ depending only on $n$ and $m$,
$$
\bigl\|\prod_{j=1}^k \bigl(\partial^{\alpha_j}u_j\bigr)\bigr\|_{L^1}\le C\,
\Bigl(\;\sum_{j=1}^k \|u_j\|_{L^\infty}\Bigr)^{k-2}
\Bigl(\;\sum_{j=1}^k \|D^mu_j\|_{L^2}\Bigr)^2\,.
$$
\end{lemm}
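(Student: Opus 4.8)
The plan is to derive Lemma~\ref{lem:GNM} from the classical Gagliardo--Nirenberg interpolation inequality, followed by H\"older's inequality and an elementary bound on products of nonnegative reals. First I would recall that for a multi-index $\alpha_j$ with $\ell_j := |\alpha_j|$ satisfying $0 < \ell_j < m$, one has, for a constant $C_{n,m}$ and all $w \in L^\infty(\RR^n)\cap H^m(\RR^n)$,
\[
\|\partial^{\alpha_j} w\|_{L^{2m/\ell_j}(\RR^n)} \;\le\; C_{n,m}\,\|w\|_{L^\infty(\RR^n)}^{\,1-\ell_j/m}\,\|D^m w\|_{L^2(\RR^n)}^{\,\ell_j/m}\,,
\]
which is the instance of the Gagliardo--Nirenberg inequality with $r=2$, $q=\infty$, and endpoint parameter $\theta = \ell_j/m$; the dimensional balance $\tfrac{\ell_j}{2m} = \tfrac{\ell_j}{n} + \tfrac{\ell_j}{m}\bigl(\tfrac12-\tfrac mn\bigr)$ holds, and since $\theta = \ell_j/m < 1$ the exceptional case of the Gagliardo--Nirenberg inequality does not arise. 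The cases $\ell_j = 0$ (where the bound is the trivial $\|w\|_{L^\infty}\le\|w\|_{L^\infty}$, with exponent $p_j = \infty$) and $\ell_j = m$ (where it is the trivial $\|D^m w\|_{L^2}\le\|D^m w\|_{L^2}$) are included with constant $1$.

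Next I would apply H\"older's inequality with the exponents $p_j := 2m/\ell_j \in (1,\infty]$, which are conjugate thanks to the hypothesis $\sum_j\ell_j = 2m$, so that $\sum_j p_j^{-1} = \sum_j\ell_j/(2m) = 1$; since $\sum_j \ell_j = 2m > 0$ at least one $\ell_j \ge 1$, so not every exponent is $\infty$. Combining with the previous step,
\[
\Bigl\|\prod_{j=1}^k\bigl(\partial^{\alpha_j}u_j\bigr)\Bigr\|_{L^1}
\;\le\;\prod_{j=1}^k\|\partial^{\alpha_j}u_j\|_{L^{p_j}}
\;\le\; \Bigl(\prod_{j=1}^k C_j\Bigr)\prod_{j=1}^k\|u_j\|_{L^\infty}^{\,1-\ell_j/m}\,\|D^m u_j\|_{L^2}^{\,\ell_j/m}\,,
\]
where $C_j = C_{n,m}$ when $0<\ell_j<m$ and $C_j = 1$ otherwise. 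Because at most $2m$ of the indices $j$ have $\ell_j \ge 1$, the product $\prod_j C_j$ is bounded by $C_{n,m}^{2m}$, so the resulting constant depends only on $n$ and $m$ and not on $k$, as claimed.

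Finally, the exponent sums are $\sum_j(1-\ell_j/m) = k - \tfrac1m\sum_j\ell_j = k-2$ and $\sum_j\ell_j/m = 2$, with all exponents nonnegative since $0\le \ell_j \le m$. Writing $A := \sum_{i}\|u_i\|_{L^\infty}$ and $B := \sum_i\|D^m u_i\|_{L^2}$ and using $\|u_j\|_{L^\infty}\le A$ and $\|D^m u_j\|_{L^2}\le B$ for each $j$, one gets $\prod_{j}\|u_j\|_{L^\infty}^{1-\ell_j/m}\,\|D^m u_j\|_{L^2}^{\ell_j/m}\le A^{k-2}B^{2}$, which completes the proof. The only step requiring genuine care is the first: one must cite the precise form of the Gagliardo--Nirenberg inequality and verify that the exceptional case is avoided. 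If one prefers to keep this section self-contained, the needed estimate can instead be proved directly via a Littlewood--Paley decomposition together with Bernstein's inequality, summing the frequency-localized pieces; I expect this to be the main (if modest) obstacle, the remaining steps being routine.
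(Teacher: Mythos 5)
Your proof is correct and is essentially the paper's own argument: the Moser-type Gagliardo--Nirenberg estimate $\|\partial^{\alpha_j}u_j\|_{L^{2m/|\alpha_j|}}\le C\,\|u_j\|_{L^\infty}^{1-|\alpha_j|/m}\|D^mu_j\|_{L^2}^{|\alpha_j|/m}$ (precisely the bound the paper cites from \cite[(3.17) in \S 13.3]{pde}) followed by H\"older's inequality with the conjugate exponents $p_j=2m/|\alpha_j|$. Your additional verifications --- the scaling balance, the $k$-independence of the constant via the observation that at most $2m$ indices have $|\alpha_j|\ge 1$, and the exponent bookkeeping yielding $A^{k-2}B^2$ --- merely spell out details the paper leaves implicit.
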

\begin{proof}
We use the following bound \cite[(3.17) in \S 13.3]{pde}. Assuming $u\in L^\infty\cap H^m$,
$$
\|\partial^{\alpha_j}u_j\|_{L^{\frac {2m}{|\alpha_j|}}}\le C\,
\|u_j\|_{L^\infty}^{1-\frac{|\alpha_j|}m}\|D^m u_j\|_{L^2}^{\frac{|\alpha_j|}m}\,.
$$
The result follows by H\"older's inequality after taking the product over $j$.
\end{proof}

We now write $\tr\bigl(W_k(t)\bigr)$ for $t>0$ as an integral
$$
\int_{0<s_1<\cdots\,<s_k<t}\frac{e^{-\frac{|x-y_k|^2}{4(t-s_k)}-\frac{|y_k-y_{k-1}|^2}{4(s_k-s_{k-1})}\cdots-\frac{|y_1-x|^2}{4s_1}}
\,V(y_k)\cdots V(y_1)}
{(4\pi)^{\frac n2(k+1)}(t-s_k)^{\frac n2}\cdots(s_2-s_1)^{\frac n2}(s_1)^{\frac n2}}\,dy_1\cdots dy_k\,ds_1\cdots ds_k\,dx\,.
$$
After integrating over $x$, and letting $s_j=tr_j$, then letting $\Sigma\subset\RR^k$ denote the set $\{r\in\RR^k\,:\,0<r_1<\cdots\,<r_k<1\}$, we obtain
$$
t^k\int_\Sigma\int_{(\RR^n)^k}\frac{e^{-\frac{|y_k-y_{k-1}|^2}{4t(r_k-r_{k-1})}\cdots-\frac{|y_2-y_1|^2}{4t(r_2-r_1)}-\frac{|y_1-y_k|^2}{4t(1+r_1-r_k)}}
\,V(y_k)\cdots V(y_1)}
{(4\pi t)^{\frac n2 k}(r_k-r_{k-1})^{\frac n2}\cdots(r_2-r_1)^{\frac n2}(1+r_1-r_k)^{\frac n2}}\,dy\,dr\,.
$$

To analyse this, we introduce variables $u_1=y_1$, and $u_j=y_j-y_1$ for $2\le j\le k$.
Then $du_1\wedge\cdots\wedge du_k=dy_1\wedge\cdots\wedge dy_k$, so the formula for $\tr\bigl(W_k(t)\bigr)$ becomes
\begin{equation}\label{eqn:trWk}
\frac{t^k}{(4\pi t)^{\frac n2}}\int_\Sigma\int_{(\RR^n)^k}
G_{r,t}(u')\,\,V(u_1+u_k)\cdots V(u_1+u_2)V(u_1)\,du\,dr\,,
\end{equation}
where $G_{r,t}(u')$ is the Gaussian function of 
$u'=(u_2,\ldots,u_k)\in(\RR^n)^{k-1}$
$$
G_{r,t}(u_2,\ldots,u_k)=\frac{e^{-\frac 1{4t}\bigl(\frac{|u_k|^2}{1+r_1-r_k}+\frac{|u_k-u_{k-1}|^2}{r_k-r_{k-1}}\,\cdots\,+
\frac{|u_3-u_2|^2}{r_3-r_2}+\frac{|u_2|^2}{r_2-r_1}\bigr)}}
{(4\pi t)^{\frac n2 (k-1)}(1+r_1-r_k)^{\frac n2}(r_k-r_{k-1})^{\frac n2}\cdots(r_2-r_1)^{\frac n2}}\,.
$$
Applying successively the following equality, which is a special case of \eqref{quadident},
\begin{multline*}
\frac{|u_{j+1}-u_j|^2}{r_{j+1}-r_j}\,+\,\frac{|u_j|^2}{r_j-r_1}\\
=
\frac{r_{j+1}-r_1}{(r_{j+1}-r_j)(r_j-r_1)}\,\Bigl|\,u_j-\frac{r_j-r_1}{r_{j+1}-r_1}\,u_{j+1}\,\Bigr|^2+\;\frac 1{r_{j+1}-r_1}\,|u_{j+1}|^2
\end{multline*}
we can write the quadratic term in the exponent of $G_{r,t}$ as
\begin{equation}\label{eqn:quadform}
\frac{|u_k|^2}{(1+r_1-r_k)(r_k-r_1)}\,+\sum_{j=2}^{k-1}\frac{(r_{j+1}-r_1)}{(r_{j+1}-r_j)(r_j-r_1)}\,\Bigl|\,u_j-\frac{r_j-r_1}{r_{j+1}-r_1}\,u_{j+1}\,\Bigr|^2
\end{equation}
In particular we see that, for all $t>0$ and $r\in\Sigma$,
$$
\int_{(\RR^n)^{k-1}} G_{r,t}(u')\,du'=1\,.
$$

For $t>0$ consider the $k$-linear form
$$
B_t(V_1,\ldots,V_k)=\int_\Sigma\int_{(\RR^n)^k}
G_{r,t}(u')\,\,V_k(u_1+u_k)\cdots V_2(u_1+u_2)V_1(u_1)\,du\,dr\,.
$$
By H\"older's inequality applied to the integral over $u_1$, we have 
$$
|B_t(V_1,\ldots,V_k)|\le \prod_{j=1}^k\|V_j\|_{L^k(\RR^n)}\,,
$$
and thus $B_t$ is uniformly continuous on bounded sets in $L^k(\RR^n)^k$. The quadratic form \eqref{eqn:quadform} is bounded below by $c\,|u'|^2$, for $c>0$ independent of $r\in\Sigma$. An approximation to the identity argument then shows that $B_t$ is continuous over $t\in [0,\infty)$, for fixed elements of $L^k(\RR^n)^k$, where we set
$$
B_0(V_1,\ldots,V_k)=\frac 1{k!}\int_{\RR^n} V_k(u_1)\cdots V_1(u_1)\,du_1\,.
$$
Consequently, we can write
$$
\tr\bigl(W_k(t)\bigr)=(4\pi t)^{-\frac n2}\,t^k\,B_t(V)\,,\qquad
B_t(V)\in C\bigl([0,\infty)\bigr)\,,\quad B_0(V)=\frac 1{k!}\int V(y)^k\,dy\,.
$$
Here we set $B_t(V)=B_t(V,\ldots,V)$, which, by the above, is for each $t$ a continuous function of $V\in L^k(\RR^n)$.

We start by demonstrating an $m$-th order expansion of $B_t(V)$ when $V\in C_{\comp}^\infty(\RR^n,\RR)$, after which we will show it applies to $V\in L^\infty_{\comp}(\RR^n,\RR)\cap H^m(\RR^n)$ by taking limits.

For $2\le j\le k$ we write
$$
V(u_j+u_1)=(2\pi)^{-n}\int e^{i\eta_j\cdot(u_1+u_j)}\widehat{V}(\eta_j)
$$
and plug this into \eqref{eqn:trWk} to express 
$$
B_t(V)=(2\pi)^{-n(k-1)}\int_\Sigma\int_{(\RR^n)^{k-1}}
e^{-tQ_r(\eta')}
\widehat V(\eta_k)\cdots \widehat V(\eta_2)\,\overline{\widehat V(\eta_2+\cdots+\eta_k)}
\,d\eta_2\cdots d\eta_k\,dr\,.
$$
where $Q_r(\eta')$ is the quadratic form inverse to \eqref{eqn:quadform}, and where $\widehat V(-\zeta)=\overline{\widehat V(\zeta)}$ since $V$ is real valued.

We expand $\exp\bigl(-tQ_r(\eta')\bigr)$ as in \eqref{eqn:expexp}. The first $m-1$ terms give contributions to $B_t(V)$ of the form
$$
(2\pi)^{-n(k-1)}\sum_{j=0}^{m-1} \frac{(-1)^j}{j!}\,t^j \int\,{Q(\eta')}^j\,
\widehat V(\eta_k)\cdots \widehat V(\eta_2)\,\overline{\widehat V(\eta_2+\cdots+\eta_k)}
\,d\eta_2\cdots d\eta_k\,,
$$
where $Q(\eta')$ is the quadratic form obtained by integrating $Q_r(\eta')$ over $r$. The key observation we need is that we can write
$$
Q(\eta')^j=\sum C_{\alpha_k,\ldots,\alpha_1}\,\eta_k^{\alpha_k}\,\cdots\,\eta_2^{\alpha_2}(\eta_2+\cdots+\eta_k)^{\alpha_1}
$$
where $\sum_{i=1}^k|\alpha_i|=2j$, and $|\alpha_i|\le j$ for every $i$.

Thus, the coefficient of $t^j$ is such a linear combination of terms of the form
$$
(2\pi)^{-n(k-1)}\int \widehat {(\partial^{\alpha_k}V)}(\eta_k)\cdots \widehat{(\partial^{\alpha_2}V)}(\eta_2)\,\overline{\widehat{(\partial^{\alpha_1}V)}(\eta_2+\cdots+\eta_k)}
\,d\eta_2\cdots d\eta_k\,,
$$
This integral is equal to
$$
\int (\partial^{\alpha_k}V)(y)\cdots (\partial^{\alpha_2}V)(y)\,(\partial^{\alpha_1}V)(y)\,dy\,,
$$
which by Lemma \ref{lem:GNM} is bounded by $C\,\|V\|_{L^\infty}^{k-2}\|D^jV\|_{L^2}^2\,.$ This establishes the bounds of Proposition \ref{thm:traceWk} on the coefficients $c_{k,j+k}$, provided $V\in C_{\comp}^\infty(\RR^n)$.

The $m$-th order remainder is a constant times
$$
t^m
\int_0^1 (1-s)^{m-1}\int_\Sigma\int_{(\RR^n)^{k-1}} e^{-stQ_r(\eta')}
{Q_r(\eta')}^m\,
\widehat V(\eta_k)\cdots \widehat V(\eta_2)\,\overline{\widehat V(\eta_2+\cdots+\eta_k)}
\,d\eta'\,dr\,ds\,,
$$
which by a similar argument can be written as an integral over $r$ and $s$ of various polynomials in $r,s$ times
$$
t^m\int e^{-stQ_r(\eta')}\widehat {(\partial^{\alpha_k}V)}(\eta_k)\cdots \widehat{(\partial^{\alpha_2}V)}(\eta_2)\,\overline{\widehat{(\partial^{\alpha_1}V)}(\eta_2+\cdots+\eta_k)}
\,d\eta_2\cdots d\eta_k\,,
$$
with $|\alpha_i|\le m$, and $\sum_i|\alpha_i|=2m$.
We now show that, uniformly over $r\in\Sigma$, and $t>0$,
\begin{equation}\label{eqn:prodbound}
\frac 1{(2\pi)^{n(k-1)}}\biggl|\,\int e^{-tQ_r(\eta')}\widehat{v_k}(\eta_k)\cdots \widehat{v_2}(\eta_2)\,\overline{\widehat{v_1}(\eta_2+\cdots+\eta_k)}
\,d\eta'\;\biggr|
\le \prod_{j=1}^k\|v_j\|_{L^{p_j}}\,,
\end{equation}
whenever $2\le p_j\le \infty$ and $\sum_j p_j^{-1}=1\,.$ We note that the proof of Lemma \ref{lem:GNM} bounds the right hand side of \eqref{eqn:prodbound}, with $p_j=2m/|\alpha_j|$ and $v_j=\partial^{\alpha_j} V$, by
$\|V\|_{L^\infty}^{k-2}\|V\|^2_{H^m}$. The bounds on $r_{k,k+m}(t)$ in Proposition \ref{thm:traceWk} will then follow for $V\in C_{\comp}^\infty(\RR^n)$.

The left hand side of \eqref{eqn:prodbound} equals
$$
\biggl|\,\int G_{r,t}(y_2-x,\ldots,y_k-x)\,v_k(y_k)\cdots v_2(y_2)\,v_1(x)\,dx\,dy_2\cdots dy_k\;\biggr|\,.
$$
The kernel $G_{r,t}$ is positive and has total integral 1, so for proving the bound we may assume each $v_j$ is nonnegative. By interpolation, we may restrict to the case that two of the $p_j$'s are equal to 2, and the rest equal $\infty$. There are then two distinct cases to consider: $p_1=p_2=2$, or $p_2=p_3=2$.
In the first case, we dominate the integral by
\begin{equation}\label{eqn:1term}
\|v_k\|_{L^\infty}\cdots\|v_3\|_{L^\infty}\int K(y_2-x)\,v_2(y_2)\,v_1(x)\,dy_2\,dx
\end{equation}
where
$$
K(z)=\int G_{r,t}(z,y_3,\ldots,y_k)\,dy_3\cdots dy_k\,.
$$
Since $\int K=1$, by Young's inequality the integral in \eqref{eqn:1term} is bounded by $\|v_2\|_{L^2}\|v_1\|_{L^2}$. 

In case $p_2=p_3=2$, we bound the integral by
\begin{equation}\label{eqn:2term}
\|v_k\|_{L^\infty}\cdots\|v_4\|_{L^\infty}\|v_1\|_{L^\infty}\int K(y_2,y_3)\,v_3(y_3-x)\,v_2(y_2-x)\,dy_2\,dy_3\,dx\,,
\end{equation}
where now
$$
K(y_2,y_3)=\int G_{r,t}(y_2,y_3,y_4,\ldots,y_k)\,dy_4\cdots dy_k\,.
$$
Thus $\widehat K(\eta_2,\eta_3)=e^{-tQ_r(\eta_2,\eta_3,0,\ldots,0)}$.
Writing $v_2$ and $v_3$ in terms of their Fourier transforms, and integrating out $y_2$ and $y_3$, expresses the integral in \eqref{eqn:2term} as
\begin{multline*}
(2\pi)^{-2n}\int e^{-ix(\eta_2+\eta_3)}e^{-tQ_r(-\eta_2,-\eta_3,0,\ldots,0)}\,\widehat{v_3}(\eta_3)\,\widehat{v_2}(\eta_2)\,d\eta_2\,d\eta_3\,dx\\
=
(2\pi)^{-n}\int e^{-tQ_r(-\eta_2,\eta_2,0,\ldots,0)}\widehat{v_3}(-\eta_2)\widehat{v_2}(\eta_2)\,d\eta_2\,,
\end{multline*}
which is bounded by $\|v_3\|_{L^2}\|v_2\|_{L^2}$ by the Schwarz inequality, as $Q_r\ge 0$.

It remains to show the expansion holds for general $V\in L^\infty(\RR^n,\RR)\cap H^m(\RR^n)$. We set $\phi_\epsilon*V=V_\epsilon\in C_{\comp}^\infty(\RR^n)$, where $\phi_\epsilon=\epsilon^{-n}\phi(\epsilon^{-1}\cdot)$ is a family of smooth compactly supported mollifiers. 

Recall that $\tr\bigl(W_k(t)\bigr)=(4\pi t)^{-n/2}t^k B_t(V)$. Since for each $t$, $B_t(V)$ is continuous in $V$ in the $L^k(\RR^n)$ topology, then 
$B_t(V)=\lim_{\epsilon\rightarrow 0}B_t(V_\epsilon)$. Furthermore, since
$\|V_\epsilon\|_{L^\infty}\le \|V\|_{L^\infty}\,,$ $\|V_\epsilon\|_{H^m}\le \|V\|_{H^m}\,,$ we have the following bounds, uniform for $t>0$ and $\epsilon>0$,
$$
\|r_{k,k+m}(t,V_\epsilon)\|\le C\,\|V\|_{L^\infty}^{k-2}\|V\|_{H^m}^2\,.
$$
It thus remains to show that $\lim_{\epsilon\rightarrow 0}c_{k,k+j}(V_\epsilon)=c_{k,k+j}(V)$ if $j\le m-1$, for appropriately defined $c_{k,k+j}(V)$ satisfying the bounds of Proposition \ref{thm:traceWk}.

Recall that $c_{k,k+j}(V_\epsilon)$ can be written as a linear combination of terms of the form
\begin{equation}\label{eqn:ckjterm}
\int (\partial^{\alpha_k}V_\epsilon)(y)\cdots (\partial^{\alpha_1}V_\epsilon)(y)\,dy\,,
\end{equation}
where $|\alpha_i|\le j$ for all $i$, and $\sum_{i=1}^k|\alpha_i|=2j\,.$
We define $c_{k,k+j}(V)$ by the same formula, which by Lemma \ref{lem:GNM} is well defined, and absolutely dominated by $\|V\|_{L^\infty}^{k-2}\|D^jV\|_{L^2}^2\,.$ To see that \eqref{eqn:ckjterm} converges, as $\epsilon\rightarrow 0$, to the same expression with $V_\epsilon$ replaced by $V$, we note that, by the proof of Lemma \ref{lem:GNM}, 
$\partial^{\alpha_i} V\in L^{\frac{2m}{|\alpha_i|}}$, so for $|\alpha_i|>0$,
$$
\lim_{\epsilon\rightarrow 0}\|\partial^{\alpha_i} V_\epsilon-\partial^{\alpha_i} V\|_{L^{\frac{2m}{|\alpha_i|}}}=0\,.
$$
Thus, the product of the $\partial^{\alpha_i}V_\epsilon$ in \eqref{eqn:ckjterm} with $|\alpha_i|\ne 0$ converges in $L^{\frac mj}$ to the same product with $V_\epsilon$ replace by $V$.
Since $\frac mj>1$, the integral in \eqref{eqn:ckjterm} converges as $\epsilon\rightarrow 0$ by the fact that $V_\epsilon\rightarrow V$ in $L^p$ for all $p<\infty$.
    
\def\arXiv#1{\href{http://arxiv.org/abs/#1}{arXiv:#1}}


\begin{thebibliography}{0}

\bibitem[BaSa95]{bs} R. Ba\~nuelos and A. S\'a Barreto, 
{\em On the heat trace of Schr\"odinger operators}, Communications
in Partial Differential Equations, {\bf 20}(1995), 2153--2164.

\bibitem[vdB91]{vdb} M. van den Berg,
{\em On the trace of the difference of Schr\"odinger heat semigroups,}
Proc. Roy. Soc. Edinburgh Sect. A 119 (1991), no. 1-2, 161--175.

\bibitem[Br84]{Br} J. Br\"uning, {\em 
On the compactness of isospectral potentials}, Communications in Partial
Differential Equations, {\bf 9}(1984), 687--698.

\bibitem[Ch99]{Ch}
T. Christiansen, {\em Some lower bounds on the number of resonances in Euclidean scattering}, Math. Res. Letters {\bf 6}(1999), 203--211.

\bibitem[Ch05]{Ch0} T. Christiansen, {\em Several complex variables and the distribution of resonances in potential scattering.} Comm. Math. Phys. {\bf 259}(2005),  711--728.

\bibitem[Ch06]{Ch1}
T. Christiansen,
{\em Schr\"odinger operators with complex-valued potentials and no resonances.} 
Duke Math. J. {\bf 133}(2006), 313--323. 


\bibitem[ChHi05]{CH} T. Christiansen and P. Hislop, {\em The resonance counting function for Schr\"odinger operators with generic potentials,} Math. Res. Lett. 
{\bf 12}(2005), 821--826.

\bibitem[ChHi10]{CH1} T. Christiansen and P. Hislop, 
{\em Maximal order of growth for the resonance counting functions for generic potentials in even dimensions.} Indiana Univ. Math. J. {\bf 59}(2010),  621--660. 

\bibitem[CdV81]{cdv1} Y. Colin de Verdi\`ere, {\em Une formule de traces pour l'op\'erateurs de Schr\"odinger dans $R^3$,}
Ann.~Sci.~\'Ec.~Norm.~Sup. 4\'eme s\'erie, {\bf 14}(1981), 27--39.

\bibitem[CdV12]{cdv} Y. Colin de Verdi\`ere, {\em Semiclassical trace formulas 
and heat expansions,} Anal. PDE {\bf 5}(2012), 693--703.

\bibitem[DiVu13]{dv13} T.-C. Dinh and D.-V. Vu,
{\em Asymptotic number of scattering resonances for generic Schr\"dingier operators,}
Comm.~Math.~Phys.
326(2014), 185--208.


\bibitem[Do04]{Don} H. Donnelly, Compactness of isospectral potentials,
Trans.~A.M.S. {\bf 357}(2005), 1717-1730.

\bibitem[DyZw]{res} S. Dyatlov and M. Zworski,
		\emph{Mathematical theory of scattering resonances,\/}
		book in preparation; \url{http://math.berkeley.edu/\~zworski/res.pdf}
 
\bibitem[Gil04]{Gil} P. B. Gilkey, {\em Asymptotic formulae in spectral geometry}, CRC, Boca Raton, FL, 2004.
 
\bibitem[Gu81]{Gui} L. Guillop\'e, {\em Asymptotique de la phase de diffusion pour 
l'op\'erateur de Schr\"odinger avec potentiel,} C. R. Acad. Sci. Paris S\'er. I 
Math. {\bf 293}(1981), 601--603. 
 
 \bibitem[HiPo03]{PoHi} M. Hitrik and I. Polterovich, {\em 
Regularized traces and Taylor expansions for the heat semigroup,}
J. London Math. Soc. {\bf 68}(2003), 402--418.

\bibitem[Je90]{J}
A. Jensen, 
{\em High energy asymptotics for the total scattering phase in potential scattering theory.} Functional-analytic methods for partial differential equations (Tokyo, 1989), 187--195, Lecture Notes in Math., {\bf 1450}, Springer, Berlin, 1990. 

\bibitem[JeKa79]{JK} A. Jensen and T. Kato, {\em Spectral properties of          
Schr\"odinger operators and time-decay of the wave functions,} Duke Math. J.
{\bf 46}(1979), 583-611.
 
\bibitem[McMo75]{MM} H. McKean and P. van Moerbeke,
{The Spectrum of Hill's Equation}, Invent.~Math. {\bf 30}(1975), 217--274.

\bibitem[Me95]{Mel} R.B. Melrose, {\em Geometric Scattering Theory,}
Cambridge University Press 1995.


\bibitem[Sa01]{Sa} A. S\'a Barreto, 
{\em Remarks on the distribution of resonances in odd dimensional Euclidean
scattering,} Asymptot. Anal. {\bf 27}(2001), 161--170. 

\bibitem[SaZw96]{SZ} A. S\'a Barreto and M. Zworski,
{\em Existence of resonances in potential scattering.} 
Comm. Pure Appl. Math. {\bf 49}(1996), 1271--1280.

\bibitem[Tay11]{pde} M.E. Taylor, 
{\em Partial Differential Equations III: Nonlinear Equations,}
Applied Mathematical Sciences, {\bf 118}, 2nd edition, Springer 2011.

\bibitem[Ti64]{Tit} E.C. Titchmarsh, {\em The theory of functions}, 
2nd edition, Oxford University Press, 1964.


\bibitem[Zw89]{z} M. Zworski, 
{\em Sharp polynomial bounds on the number of scattering poles,}
Duke Math. J. {\bf 59}(1989), 311--323.

\bibitem[Zw97]{Zw} 
 M. Zworski, {\em Poisson formula for resonances,}
S\'eminaire E.D.P. (1996--1997), Expos\'e no XIII, 12 p.

\bibitem[Zw12]{e-z} M. Zworski,
        \emph{Semiclassical analysis,\/}
        Graduate Studies in Mathematics \textbf{138}, AMS, 2012.

\end{thebibliography}
\end{document}